\UseRawInputEncoding
\documentclass{article}
\usepackage{amsfonts}
\usepackage{amsfonts}
\usepackage{amsfonts}
\usepackage{amsfonts}
\usepackage{amsfonts}
\usepackage{amsfonts}
\usepackage{amsfonts}
\usepackage{mathrsfs}
\usepackage{CJK,amssymb,amsmath,amsfonts,amsthm,graphicx,enumerate,amscd}
\usepackage{bbm}
\usepackage{amssymb}
\usepackage{graphicx,verbatim}
\usepackage{amscd}
\usepackage{amsmath}
\usepackage{amsfonts}
\usepackage{graphicx}
\usepackage{latexsym}
\usepackage{multirow}
\usepackage{bigdelim}
\usepackage{color}

\newtheorem{thm}{Theorem}[section]
\newtheorem{lem}{Lemma}[section]
\newtheorem{cor}{Corollary}[section]
\newtheorem{prop}{Proposition}[section]

\newtheorem{rmk}{Remark}[section]

\begin{document}
\title{Grassmannian perspectives  of classical Lie groups and Cartan involutions}
\author{Yunxia Chen \& Naichung Conan Leung}
\date{}
\maketitle

\begin{abstract}
Classical noncompact reductive Lie group $G$ admits a compactification $\overline{G}$ as a Riemannian symmetric space by He. First, we provide a unified construction of these compactifications via Grassmannian geometry and realize the group structures in terms of the geometry of configurations of linear subspaces.

Second, we show that the Cartan involution $\rho$ on $G$  extends uniquely to an isometric involution $\bar{\rho}$ on $\overline{G}$ and $\overline{G}^{\bar{\rho}} = G^{\rho} = K$, the maximal compact subgroup of $G$.

Third, we show that $\eta(g) = \rho(g)^{-1}$  extends uniquely to an isometric involution $\bar{\eta}$ on $\overline{G}$ and $\overline{G}^{\bar{\eta}} = G_c/K$, the compact symmetric space dual to $(G^\eta)_0 = G/K$. This provides a natural generalization of the classical Borel embeddings $G/K \hookrightarrow G_c/K$. Furthermore, $K$ and $G_c/K$ form a complementary  pair of reflective submanifolds in $\overline{G}$.
\end{abstract}
\section{Introduction}
Throughout this paper, let $G$ be a real reductive Lie group of classical type\footnote{The classical Lie groups here are in Howe's sense, i.e. we refer to the matrix groups  without requiring their
determinants equal to one. Specifically, we work with the groups $GL(n, \mathbb{R})$, $GL(n, \mathbb{C})$, $GL(n, \mathbb{H})$, $O(p,q)$, $U(p,q)$, $Sp(p,q)$,
$Sp(n,\mathbb{R})$, $Sp(n,\mathbb{C})$, $O(n,\mathbb{C})$, $O^{*}(2n)$.}, and let $V \cong \mathbb{F}^n$ be a vector space\footnote{$\mathbb{F}$-module when $\mathbb{F}=\mathbb{H}$.} over $\mathbb{F}$, where $\mathbb{F}$ is either $\mathbb{R}$, $\mathbb{C}$, or $\mathbb{H}$.
By classification \cite{Har}\cite{K}, such a group $G$  can be realized as the automorphism group $G=Aut(V,b)$ of a (possibly zero) $\mathbb{R}$-bilinear form $b$  on $V$. Explicitly, we have the following correspondence\footnote{Refer to section 1.1 in \cite{Har} for the standard $\mathbb{R}$-bilinear forms and the groups defined by these forms.}:
\vspace{-8pt}
\begin{table}[htbp]
\centering
\caption{The groups defined by $\mathbb{R}$-bilinear forms}
\begin{tabular}{|c|c|c|c|c|c|}
  \hline
  $\mathbb{F}$ $\backslash$ $b$    & $b=0$ & \small{symmetric}  & \small{skew}  & \small{Hermitian  symmetric} & \small{Hermitian skew}  \\  \hline
  $\mathbb{R}$  & $GL(n, \mathbb{R})$ & $O(p,q)$ & $Sp(n,\mathbb{R})$ &  &  \\  \hline
  $\mathbb{C}$ & $GL(n, \mathbb{C})$ &  $O(n,\mathbb{C})$ & $Sp(n,\mathbb{C})$ & $U(p,q)$ & $U(p,q)$ \\ \hline
  $\mathbb{H}$ & $GL(n, \mathbb{H})$  &  &  & $Sp(p,q)$ & $O^{*}(2n)$ \\ \hline
\end{tabular}
\end{table}

In \cite{HL},  it is established that every classical compact Lie group admits a Grassmannian characterization via the following explicit isomorphisms:
\[
O(n) \cong \{L \cong \mathbb{R}^n \subset \mathbb{R}^{n}\oplus \mathbb{R}^{n}: h\oplus-h|_{L}=0\}
\]
\[
U(n) \cong \{L \cong \mathbb{C}^n \subset \mathbb{C}^{n}\oplus \mathbb{C}^{n}: h\oplus-h|_{L}=0\}
\]
\[
Sp(n) \cong \{L \cong \mathbb{H}^n \subset \mathbb{H}^{n}\oplus \mathbb{H}^{n}: h\oplus-h|_{L}=0\}
\]
where $h(u,v)=\bar{u}^tv$ on $\mathbb{R}^{n}$, $\mathbb{C}^{n}$, $\mathbb{H}^{n}$.
More generally, \cite{HL} provides a unified geometric realization of all compact Riemannian symmetric spaces as Grassmannians using the magic square.

In \cite{He}, H.Y. He established that every classical noncompact Lie group admits a natural compactification. Specifically, for $G=Aut(V,b)$, let $M=IG(V \oplus V, b\oplus-b)$ denote the Grassmannian of maximal isotropic $\mathbb{F}$-subspaces in $V\oplus V$ with respect to the bilinear form $b\oplus-b$.  Then $\overline{G} \triangleq M$ defines the Grassmannian compactification of $G$. In particular, $\overline{GL(n, \mathbb{F})}=Gr_{\mathbb{F}}(n,2n)$. B.H. Fu and Q.F. Li show in \cite{FL} that the wonderful compactification can be obtained geometrically via successive blowups of the Grassmannian compactification.

Our first result establishes a Grassmannian-type realization of every group $G$ and its Lie algebra $\mathfrak{g}$.  Specifically, we show that
$G$ can be identified with a double graph-like Grassmannian. When $G$ is compact, every $L \in M$ is double graph-like and the equality $\overline{G}=G$ holds automatically (Proposition 5.1).
Moreover, $\mathfrak{g}$ corresponds to a graph-like Grassmannian.

\begin{thm}
(i) $G \cong \{L \in M: L \bigcap (V\oplus 0)=L \bigcap (0\oplus V)=\{0\}\}$;

(ii) $\mathfrak{g} \cong \{L \in M: L \bigcap L_0=\{0\}\}$ for any $L_0 \in M$.
\end{thm}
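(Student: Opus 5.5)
For (i) I will use the graph map $g \mapsto \Gamma_g=\{(v,gv): v\in V\}\subset V\oplus V$. It is an $\mathbb{F}$-subspace of dimension $n$. The defining condition of $G$ gives $(b\oplus -b)((v,gv),(w,gw))=b(v,w)-b(gv,gw)=0$, so $\Gamma_g$ is isotropic. Since $\dim_{\mathbb{F}}V=n$ and $b\oplus-b$ has split signature (the form $b\oplus -b$ is always "neutral", with maximal isotropic subspaces of dimension exactly $n$ — the diagonal $\Delta=\Gamma_{\mathrm{id}}$ is one such), $\Gamma_g$ is maximal isotropic, i.e. $\Gamma_g\in M$. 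When $b=0$ every subspace is isotropic and $M=Gr_{\mathbb{F}}(n,2n)$, so that case is consistent. Since $g$ is invertible, $\Gamma_g\cap(0\oplus V)=0$ and $\Gamma_g\cap(V\oplus 0)=0$.

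For the converse, take $L\in M$ with both intersections trivial. The condition $L\cap(0\oplus V)=0$ together with $\dim L=n$ makes the first projection $L\to V$ an isomorphism. Hence $L=\Gamma_g$ for a unique $\mathbb{F}$-linear $g:V\to V$. The condition $L\cap(V\oplus0)=0$ means $\ker g=0$, so $g\in GL(V)$. Isotropy of $L$ reads $b(gv,gw)=b(v,w)$, i.e. $g\in Aut(V,b)=G$. Injectivity of $g\mapsto\Gamma_g$ is clear. This gives the bijection. If a smooth identification is wanted, it follows because the graph chart is a standard affine chart of the Grassmannian and $M$ is a closed submanifold.

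For (ii) I first treat $L_0=\Delta$, then transport to an arbitrary $L_0$. The natural chart centered at the transversal point $\Delta^-=\{(v,-v)\}$ consists of $n$-planes transverse to $\Delta$; these are graphs $\{(v+Xv,\,-v+Xv)\}$ of linear maps $X:V\to V$, using the decomposition $V\oplus V=\Delta^-\oplus\Delta$. Equivalently, this is the Cayley transform: such an $L$ equals $\Gamma_g$ with $g=(X-1)(X+1)^{-1}$, away from the locus where $X+1$ is singular.

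Isotropy is then a direct computation:
\[
b(v+Xv,w+Xw)-b(-v+Xv,-w+Xw)=2\bigl(b(v,Xw)+b(Xv,w)\bigr).
\]
So $L$ is isotropic iff $b(Xv,w)+b(v,Xw)=0$ for all $v,w$, i.e. $X\in\mathfrak{g}$. When $b=0$ this is all of $\mathfrak{gl}(V)$, as it should be. One has to double check in the Hermitian/quaternionic cases that sesquilinearity is respected, but $X$ is $\mathbb{F}$-linear, so the same expansion holds. Thus $\{L\in M: L\cap\Delta=0\}\cong\mathfrak{g}$.

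The step I expect to be the main obstacle is passing to an arbitrary $L_0\in M$. The plan is a transitivity (Witt-type) statement: the isometry group $Aut(V\oplus V,b\oplus-b)$ acts transitively on $M$, since any two maximal isotropic subspaces of a nondegenerate form of this type are related by an isometry (Witt's extension theorem in each of the listed cases, and trivially $GL(2n,\mathbb{F})$ when $b=0$). Choose an isometry $\phi$ with $\phi(\Delta)=L_0$. Then $\phi$ carries $\{L:L\cap\Delta=0\}$ bijectively onto $\{L:L\cap L_0=0\}$, which composed with the above gives $\mathfrak{g}\cong\{L\in M:L\cap L_0=0\}$. The delicate point is that $b$ may be degenerate, e.g. $b=0$. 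In that case I argue directly that $M=Gr(n,2n)$ is homogeneous under $GL(2n,\mathbb{F})$, while in all other table entries $b$ is nondegenerate and Witt applies.
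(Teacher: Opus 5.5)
Your proof is correct and is essentially the paper's. Part (i) is the graph map $g\mapsto L_g=\{(x,gx): x\in V\}$ used in the Appendix, and part (ii) is the same chart computation at the diagonal as in Lemma 2.3: the paper parametrizes the planes transverse to $\Delta^-=\{(x,-x)\}$ as $L'_A=\{(x+Ax,x-Ax)\}$, while you parametrize those transverse to $\Delta$, which just swaps the roles of $\Delta$ and $\Delta^-$.

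The one difference is how you pass to an arbitrary $L_0$. The paper uses a ``WLOG'' appeal to homogeneity of the symmetric space $M$ plus the fact that $X\mapsto X^{\perp}$ maps $M$ to itself (Remark 2.2); you use Witt-transitivity of $Aut(V\oplus V,b\oplus-b)$ on $M$, which avoids the auxiliary inner product and makes that step explicit.
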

This Grassmannian viewpoint on $G$  allows us to interpret the group structure of $G$ in terms of the geometry of configurations of subspaces in $V\oplus V$ (see Appendix),
it also provides a natural description of the Cartan involution on  $G$ within the same geometric framework.

\bigskip

The Cartan involution is a fundamental structure in Lie theory, representation theory, harmonic analysis, and Riemannian symmetric space theory.
For any classical noncompact Lie group $G=Aut(V,b)$, given an inner product on $V$, it determines

(1) a maximal compact subgroup $K \subset G$ preserving this inner product, and a Cartan involution  $\rho: G \rightarrow G$ with fixed point set $G^{\rho}=K$ and;

(2) a Riemannian structure $d_{\overline{G}}$ on $\overline{G}$, realizing $\overline{G}$ as a Riemannian symmetric space.
\begin{thm}
The Cartan involution $\rho$ on $G$ extends uniquely to an involutive isometry $\bar{\rho}$ on $(\overline{G}, d_{\overline{G}})$ and $\overline{G}^{\bar{\rho}}=G^{\rho}=K$.
\end{thm}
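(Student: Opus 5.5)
We work with the embedding of Theorem 1.1(i), which sends $g\in G$ to its graph $\Gamma_g=\{(v,gv):v\in V\}\in M=IG(V\oplus V,b\oplus -b)$. Fix the inner product $h$ on $V$ defining $K$. Then the Cartan involution has the form $\rho(g)=\theta g\theta^{-1}$ or $\rho(g)=(g^{*})^{-1}$, where $g^*$ is the $h$-adjoint; in every case of Table 1, $K=G\cap U(V,h)$.

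\textbf{Step 1 (construction of $\bar\rho$).} We look for an element $\Phi\in GL(V\oplus V)$ with three properties: it preserves $b\oplus -b$, it acts isometrically for the symmetric metric $d_{\overline G}$ (which comes from the inner product $h\oplus h$), and it satisfies $\Phi(\Gamma_g)=\Gamma_{\rho(g)}$. The natural candidate uses that $\rho(g)=(g^*)^{-1}$. We have
\[
\Gamma_{(g^*)^{-1}}=\{(g^*w,w)\}=\big(\Gamma_g\big)^{\perp_{h\oplus h}}\ \text{up to the sign map } (x,y)\mapsto(x,-y)\ \text{and the swap } (x,y)\mapsto(y,x).
\]
Indeed, $(v,gv)\perp(x,y)$ if and only if $h(v,x)+h(gv,y)=0$ for all $v$, that is, $x=-g^*y$. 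So we define
\[
\bar\rho(L)=\sigma(L^{\perp}),\qquad \sigma(x,y)=(-y,x)\ \text{(or the appropriate sign variant)}.
\]
We must check that $L\mapsto L^\perp$ maps $M$ to $M$. The hypotheses of Theorem 1.1 involve the compatibility between $b$ and $h$: there is $J$ with $b(u,v)=h(Ju,v)$, and $J$ commutes with $h$-adjunction up to sign. From this, $L$ being maximal isotropic for $b\oplus-b$ implies $L^\perp=(J\oplus -J)L$ up to the twist, which is again maximal isotropic. The map $L\mapsto L^{\perp}$ is an isometry of the Grassmannian, since it preserves principal angles. The map $\sigma$ lies in $U(V\oplus V,h\oplus h)$ and is therefore also an isometry. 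Involutivity follows because $\sigma^2=\pm 1$ and $\perp$ commutes with $\sigma$. I expect this step to be the main obstacle. The sign and swap conventions, and the identification of $L^\perp$ with an element of $M$, have to be checked uniformly across the ten cases, which may require working type by type with the standard forms from \cite{Har}.

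\textbf{Step 2 (uniqueness).} By Theorem 1.1(i), $G$ is an open subset of $M$, and it is dense since $\overline G=M$ is its compactification. Two continuous maps that agree on a dense set agree everywhere. Hence any continuous extension of $\rho$, and in particular any isometric one, equals $\bar\rho$.

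\textbf{Step 3 (fixed points).} Clearly $G^{\rho}=K\subset\overline G^{\bar\rho}$. For the reverse inclusion, suppose $\bar\rho(L)=L$. Then $L^\perp=\sigma^{-1}L$. If $(v,0)\in L$, we must show $v=0$. Since $L$ is fixed, $\sigma^{-1}L=L^\perp$, so $(0,\pm v)$, or its analogue, lies in $L^\perp$. Combined with maximal isotropy, and using $L\oplus L^\perp=V\oplus V$, we want to derive a contradiction. The cleaner route is this: $\sigma^{-1}L=L^\perp$ means $h\oplus h(\sigma x, y)=0$ for $x,y\in L$. This says $L$ is Lagrangian for the form $\omega(x,y)=h(x_1,y_2)-h(x_2,y_1)$ (up to sign). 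Such an $L$ is transverse to $V\oplus0$ and $0\oplus V$, because $V\oplus 0$ is itself $\omega$-isotropic and we can compare dimensions, arguing via $h$-positivity: $\omega$ restricted to $V\oplus0$ and $L$ forces any intersection to be $h$-null. Therefore $L=\Gamma_g$ with $g\in G$. Finally, $\rho(g)=g$ gives $g\in K$.
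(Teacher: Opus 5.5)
\textbf{There is a genuine error in Step 1, and it carries over to Step 3.} Your own computation gives $\Gamma_g^{\perp}=\{(-g^*y,y)\}$, and the sign map $(x,y)\mapsto(x,-y)$ \emph{alone} already turns this into $\{(g^*w,w)\}=\Gamma_{(g^*)^{-1}}$. The swap must not be included. With $\sigma(x,y)=(-y,x)$ you get $\sigma(\Gamma_g^{\perp})=\{(-y,-g^*y)\}=\Gamma_{g^*}$, and every sign variant of the swap yields $\Gamma_{\pm g^*}$. So the map $L\mapsto\sigma(L^{\perp})$ restricts on $G$ to $g\mapsto g^*=\rho(g)^{-1}$: it extends $\eta$, not $\rho$. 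In fact $\sigma=J_{n,n}^{-1}$, so your map is exactly the paper's $\bar\eta(L)=L^{\perp_{\mathbf w}}$ for the skew-Hermitian form $\mathbf w(u,v)=\bar u^tJ_{n,n}v$. This is not a sign convention that type-by-type checking would repair.

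\textbf{Consequently Step 3 is false for your map.} Its fixed points are the Lagrangians of the skew form $\omega$, and these make up the compact dual $G_c/K$ rather than $K$. For example, for $G=GL(n,\mathbb F)$ the subspace $V\oplus 0$ is $\omega$-isotropic of dimension $n$ (as you yourself observe), hence fixed, yet it is not a graph. No positivity argument can work for a skew form.

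\textbf{The correct construction}, which is the paper's, is $\bar\rho(L)=(1\oplus -1)L^{\perp}=L^{\perp_{\mathbf h}}$ with $\mathbf h=h\oplus -h$. It has the needed properties:
\begin{itemize}
\item It is an isometry, being the composite of $L\mapsto L^\perp$ and a unitary map.
\item It satisfies $\bar\rho(\Gamma_g)=\Gamma_{(g^*)^{-1}}$.
\item It preserves $M$, because it commutes with $L\mapsto L^{\perp_{b\oplus -b}}$.
\item $\bar\rho(L)=L$ if and only if $\mathbf h|_L=0$.
\end{itemize}
Since $\mathbf h$ is positive definite on $V\oplus 0$ and negative definite on $0\oplus V$, any such $L$ meets both trivially. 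Hence $L=\Gamma_g$ with $g\in G$, and $\rho(g)=g$ gives $g\in K$.

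Your density argument for uniqueness in Step 2 is fine and matches the paper.
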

Noncompact Riemannian symmetric spaces are in one-to-one correspondence with  noncompact reductive Lie groups \footnote{up to  finite covers.}.
For a noncompact  $G=Aut(V,b)$,  consider the involution $\eta: G \rightarrow G$ defined by  $\eta(g)=\rho(g)^{-1}$.
Then the identity connected component $(G^{\eta})_0$ of the fixed point set $G^{\eta}$ is isomorphic to the associated noncompact Riemannian symmetric space
$G/K$, i.e.
 \[
 (G^{\eta})_0 \cong G/K.
 \]
 Moreover, the following theorem holds:
\begin{thm}
(i) $\eta$ extends uniquely to an involutive isometry $\bar{\eta}$ on $(\overline{G}, d_{\overline{G}})$.

(ii) $\overline{G}^{\bar{\eta}}$ and $\overline{G}^{\bar{\rho}}$ form a complementary  pair of reflective submanifolds in $\overline{G}$. Consequently, there exists a compact Lie group $G_c$ such that $\overline{G}^{\bar{\eta}} \cong G_c/K$.
\end{thm}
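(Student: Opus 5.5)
Under the identification of Theorem 1.1(i), $g\in G$ corresponds to its graph $\Gamma_g=\{(v,gv):v\in V\}\subset V\oplus V$. The inner product $h$ on $V$ defining $K$ makes $\rho(g)=(g^*)^{-1}$, where $g^*$ is the $h$-adjoint; compatibility of $h$ with $b$ gives $\rho(G)=G$. Hence $\eta(g)=g^*$. The plan is to find a linear isometry $T$ of $(V\oplus V,h\oplus h)$, respecting $b\oplus -b$ up to sign, such that $T(\Gamma_g)=\Gamma_{\eta(g)}$, and then to set $\bar\eta(L)=T(L)$ (or $T(L)^{\perp}$) on $M$.

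\textbf{Step 1: construction of $\bar\eta$.} First note that
\[
\Gamma_{g^{-1}}=\sigma(\Gamma_g),\qquad \sigma(u,v)=(v,u),
\]
and that
\[
\Gamma_{(g^*)^{-1}}=\Gamma_g^{\perp_{h\oplus h}}\ \text{composed with}\ (u,v)\mapsto(u,-v).
\]
Indeed, $(u,w)\perp(v,gv)$ for all $v$ means $h(u,v)+h(w,gv)=0$, i.e. $u=-g^*w$. Hence
\[
\Gamma_{\eta(g)}=\Gamma_{g^*}=\tau\bigl(\Gamma_g^{\perp}\bigr)
\]
for an explicit linear map $\tau$ such as $(u,w)\mapsto(-w,u)$ (I will fix the sign by direct check). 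The map $L\mapsto\tau(L^{\perp})$ is defined on all of $Gr(n,V\oplus V)$. It is an isometry for the standard metric, since both $\perp$ and unitary maps are isometries. I will check that it preserves $M$: if $L$ is maximal isotropic for $b\oplus-b$, then $L^{\perp_h}=J(L)$ for the operator $J$ relating $b$ and $h$, and $J$ preserves $b$ up to a scalar. This restricts to $\bar\eta$ on $M$, and it is involutive because $\eta^2=\mathrm{id}$ on the dense open set $G$, together with continuity. Uniqueness follows from density of $G$ in $\overline{G}$: $G$ is open in $M$ by Theorem 1.1(i), and $M$ is connected or each component meets $G$. The same density argument will be used for $\bar\rho$ in Theorem 1.2, and $\bar\rho\bar\eta=\bar\iota$, where $\bar\iota=\sigma$ extends inversion.

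\textbf{Step 2: complementary reflective pair.} Both $\bar\rho$ and $\bar\eta$ are involutive isometries, so their fixed sets are totally geodesic, and each component is reflective. Complementarity means that at a common point $p$ the tangent spaces are orthogonal complements. I will take $p=\Gamma_{\mathrm{id}}$. Then $\bar\rho$ and $\bar\eta$ commute and $\bar\rho\bar\eta=\bar\iota$. At $p$ the differential of $\bar\iota$ is $-1$, so $\bar\iota$ is the geodesic symmetry $s_p$. Hence $d\bar\eta_p=-d\bar\rho_p$, so the $(+1)$-eigenspace of $d\bar\eta_p$ is the $(-1)$-eigenspace of $d\bar\rho_p$. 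This gives $T_pK=\mathfrak k$ and $T_p\overline{G}^{\bar\eta}=\mathfrak p$, which are complementary in $T_p\overline{G}\cong\mathfrak g$. The key claim to verify is that $s_p=\sigma$, i.e. that $\sigma$ is the symmetry at $\Gamma_{\mathrm{id}}$; it acts by $-1$ on the tangent space $\mathrm{Hom}(\Gamma_{\mathrm{id}},\Gamma_{\mathrm{id}}^{\perp})$.

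\textbf{Step 3: identifying $G_c/K$.} The complementary totally geodesic submanifold through $p$ is the orbit $\exp(\mathfrak p)\cdot p$ in the compact symmetric space. This is the compact symmetric space with Lie triple system $\mathfrak p$ and isotropy $K$, namely $G_c/K$ with $\mathfrak g_c=\mathfrak k\oplus i\mathfrak p$. The main obstacle is showing that the \emph{whole} fixed set $\overline{G}^{\bar\eta}$ (or its relevant component) equals this orbit, and that it contains $G^\eta$ as the dense open piece. For this I will describe $\overline{G}^{\bar\eta}$ directly as $\{L\in M:\tau(L^\perp)=L\}$, which is itself a Grassmannian of subspaces compatible with a second form. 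I will then check that it is a homogeneous space of the compact group preserving both $h$ and the twisted form, with stabilizer $K$ at $p$, case by case over Table 1 if necessary.
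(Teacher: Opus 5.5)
Your Step 1 is the paper's construction. The map $\tau(u,w)=(-w,u)$ is the matrix $J_{n,n}^{-1}$, so $\tau(L^{\perp})=L^{\perp_{\mathbf w}}$ with $\mathbf w(u,v)=\bar u^tJ_{n,n}v$, which is how the paper defines $\bar\eta$. Your isometry argument (an ambient isometry preserving $M$ restricts to an isometry of the induced metric) is fine, and it does not need the convexity result the paper quotes.

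One slip: ``$G$ is open and meets every component of $M$'' does not imply density. What does is that $M\setminus G$, the locus where $\pi_1|_L$ or $\pi_2|_L$ is singular, is a proper real-analytic subset of each component, hence nowhere dense.

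Step 2 is correct and genuinely different from the paper. The paper works in the chart $A\mapsto L'_A=\{(x+Ax,x-Ax)\}$ at $o=\Gamma_{\mathrm{id}}$. There it identifies $T_o\overline{G}^{\bar\rho}$ with $\mathfrak k$ and $T_o\overline{G}^{\bar\eta}$ with $\mathfrak m$ (indeed $\bar\rho(L'_A)=L'_{-A^*}$ and $\bar\eta(L'_A)=L'_{A^*}$), then uses Frobenius orthogonality of skew-Hermitian and Hermitian matrices. You instead use $\bar\rho\bar\eta=\sigma$ (as matrices, $I_{n,n}J_{n,n}^{-1}=-\sigma$) and the fact that $\sigma$ is the orthogonal reflection in $o$, hence $s_o$, to get $d\bar\eta_o=-d\bar\rho_o$. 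This is literally Leung's definition of a complementary pair ($\bar\eta=\bar\rho\circ s_o$), and it spares you identifying the metric at $o$ with the Frobenius form.

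The gap is in Step 3. Knowing $T_oN=\mathfrak p$ only as a linear subspace of $T_o\overline{G}\cong\mathfrak g$ says nothing about the curvature of $N$. The triple product on $T_oN$ comes from the curvature of $\overline{G}$, not from the bracket of $\mathfrak g$. So ``the compact symmetric space with Lie triple system $\mathfrak p$ and isotropy $K$'' is asserted, not proved.

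The paper does no computation here either: it invokes Leung's classification of complementary pairs (one member is $K$, so the other is $G_c/K$). You could do the same once Step 2 is in place. For your intrinsic route, the missing computation is short:
\begin{itemize}
\item $\overline{G}$ is totally geodesic in $Gr(n,V_1\oplus V_2)$.
\item In the chart $L'_A$, the Grassmannian's triple product at $o$ is $[X,Y,Z]=YX^*Z-XY^*Z+ZX^*Y-ZY^*X$.
\item For Hermitian $X,Y,Z$ this equals $-[[X,Y],Z]$, which is exactly the triple product of $i\mathfrak p\subset\mathfrak g_c=\mathfrak k\oplus i\mathfrak p$ under $X\leftrightarrow iX$.
\end{itemize}
This gives a local isometry with $G_c/K$. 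The global identification still needs the isotropy computation you postpone.

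Also, only the component of $\overline{G}^{\bar\eta}$ through $o$ can be $G_c/K$, so do not try to show the whole fixed set is the orbit. For $G=O(1,1)$, $\overline{G}\cong O(2)$ is two circles. $\bar\eta$ fixes the circle through $o$ pointwise and acts as a reflection on the other. So $\overline{G}^{\bar\eta}$ is a circle plus the two points $L_{\pm I_{1,1}}$. These two points lie in $G^\eta$ but not in the component through $o$, which contains only $(G^\eta)_0$. The paper's phrasing glosses over this as well.
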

Recall that a submanifold $N$ of a Riemannian manifold $M$ is said to be reflective if it is the fixed point set
of an involutive isometry on $M$. Two reflective submanifolds $N_1$ and $N_2$ passing through the origin point $o$ of a Riemannian symmetric space $M$
form a complementary  pair if and only if $T_oN_2$ is the orthogonal complement of $T_oN_1$ in $T_oM$, i.e. $T_oM=T_oN_1\oplus T_oN_2$. The classification of reflective submanifolds
and complementary pairs of reflective submanifolds in Riemannian symmetric spaces was established by S.P. Leung in \cite{L2}.

As a corollary, we obtain the embedding $G/K \cong (G^{\eta})_0 \hookrightarrow \overline{G}^{\bar{\eta}} \cong G_c/K$, which generalizes the Borel embedding (known in the Hermitian case) to all classical Riemannian symmetric spaces. Furthermore, this construction induces a natural embedding of certain semisimple symmetric spaces into Grassmannian manifolds.
Additionally, we characterize classical noncompact Riemannian symmetric spaces as ``space-like" Grassmannians; that is, they consist exactly of the space-like elements within their compact duals.

\begin{thm}
$G/K  \cong \{L \in G_c/K: h\oplus-h|_L>0\}$.
\end{thm}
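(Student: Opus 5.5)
We work inside $\overline{G}=M=IG(V\oplus V,b\oplus -b)$, where the fixed inner product $h$ on $V$ determines both $\rho$ and $\bar{\eta}$. By Theorem 1.1(i), $g\in G$ corresponds to its graph $L_g=\{(v,gv):v\in V\}$. The first task is to write $\bar{\eta}$ explicitly on subspaces. Since $\rho(g)=(g^{*})^{-1}$, with $g^*$ the $h$-adjoint, we have $\eta(g)=g^{*}$. The graph of $g^*$ is the $(h\oplus -h)$-orthogonal complement of $L_g$ followed by the swap $\sigma(v,w)=(w,v)$. Indeed, $(u,w)\perp L_g$ iff $h(u,v)=h(w,gv)=h(g^*w,v)$ for all $v$, iff $u=g^*w$, so $L_g^{\perp}=\{(g^*w,w)\}$ and $\sigma(L_g^\perp)=L_{g^*}$. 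We therefore take
\[
\bar{\eta}(L)=\sigma\bigl(L^{\perp_{h\oplus -h}}\bigr).
\]
This map is continuous and preserves $M$, since $b$ is compatible with $h$ so that orthogonal complements of maximal $b\oplus-b$-isotropic subspaces remain maximal isotropic. By the uniqueness in Theorem 1.3(i) and density of $G$ in $\overline{G}$, it is the extension of $\eta$.

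Next we compute the fixed locus. Note $L\cap L^{\perp}$ is $h\oplus -h$ null, and $\bar\eta(L)=L$ means $L^\perp=\sigma(L)$. Now let $L\in G_c/K=\overline{G}^{\bar{\eta}}$ with $h\oplus -h|_L>0$. Then $L\cap(0\oplus V)=0$, because that summand is negative definite. Dimension count ($\dim L=n$) gives that $L$ is the graph of some linear map $A:V\to V$. Positivity says $h(v,v)-h(Av,Av)>0$, i.e.\ $\|A\|<1$. Since $\sigma$ preserves $L$ up to the fixed-point relation, $\sigma(L)=L^{\perp}$ also has positive complement condition, and in particular $L\cap(V\oplus 0)$ is determined. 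We also need $A$ invertible so that $L$ lies in $G$. For that, observe $\sigma(L)=L^\perp$ is the graph of $A^*$ taken from the second factor, i.e.\ $\{(A^*w,w)\}$. Comparing with $\sigma(L)=\{(Av,v)\}$ gives $A=A^*$. Moreover $L^{\perp}$ is negative definite, and the form restricted to $\sigma(L)$ equals minus that on $L$, which is consistent.

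The remaining issue is invertibility, and I expect this to be the main obstacle. A self-adjoint contraction $A$ with $\|A\|<1$ need not be invertible, so the naive identification fails. The fix is to use the correct normalization of the graph: we should use the Cayley-type coordinates in which $G^\eta$ corresponds to positive self-adjoint $g=g^*>0$ in $G$. Concretely, for $g=e^{X}$ with $X\in\mathfrak{p}$, the fixed point of $\bar\eta$ should be the line $\{(\cosh$-type$)\}$, i.e.\ we parametrize $L=\{((1+A)v,(1-A)v)\}$ instead. Then positivity of $h\oplus -h$ on $L$ becomes $4\,\mathrm{Re}\,h(Av,v)>0$, which up to the chosen identification gives $A=A^*>0$. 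This also gives $L\cap(V\oplus0)=L\cap(0\oplus V)=0$, since $1\pm A$ are invertible when $\|A\|<1$ after rescaling. Under Theorem 1.1 such $L$ is then a self-adjoint positive element $g$ of $G$ (with $b$-compatibility inherited from $L\in M$). The set of positive self-adjoint elements of $G$ is exactly $\exp\mathfrak{p}=(G^\eta)_0\cong G/K$.

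Conversely, every $e^X$, $X\in\mathfrak{p}$, gives a space-like $\bar\eta$-fixed $L$. This follows by reversing the computation above, or by connectedness: the space-like condition is open and closed in $(G^\eta)$ restricted to the component through the identity, whose graph is the diagonal where $h\oplus -h$ vanishes. The precise choice of base point must be fixed so that the identity maps to a space-like $L$, which is why the Cayley shift is needed. Carefully matching the paper's Grassmannian model to this shifted parametrization is the step I would verify first.
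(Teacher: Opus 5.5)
You have the right ingredients. Your $\bar{\eta}(L)=\sigma(L^{\perp_{\mathbf{h}}})$ is the paper's $L^{\perp_{\mathbf{w}}}$, since $\mathbf{w}(u,v)=\mathbf{h}(u,\sigma v)$. Your shifted parametrization $L=\{((1+A)v,(1-A)v)\}$ is exactly $L=\mathcal{C}(L_A)$ for the paper's Cayley transform $\mathcal{C}[A;B]=[A+B;A-B]$. But the step ``$b$-compatibility inherited from $L\in M$'' is wrong, and it is precisely the point the paper's proof has to handle.

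If $L=\{((1+A)v,(1-A)v)\}$ lies in $M=IG(V_1\oplus V_2,b\oplus -b)$, expanding $b\oplus -b$ gives $b(Ax,y)+b(x,Ay)=0$. That is $A\in\mathfrak{g}$ (the $L'_A$ computation of Lemma 2.3), not $A\in G$. For $G\neq GL(n,\mathbb{F})$ no element of $\mathfrak{g}$ is positive definite, since it is similar to $-\bar{A}^t$ or $-A^t$.

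In fact the set you are studying is empty. For $G\neq GL(n,\mathbb{F})$, $\mathbf{h}|_L>0$ forces $L=L_A$ with $\|A\|<1$. Then $L_A\in M$ means $b(Ax,Ay)=b(x,y)$, so $A$ is invertible and $A\in G$. For the standard forms of Section 2.2, $A^{-1}$ is a unitary conjugate of $\bar{A}^t$ or $A^t$, whence $\|A^{-1}\|=\|A\|$ and so $\|A\|\geq 1$, a contradiction. Thus for $G\neq GL(n,\mathbb{F})$, $M$ has no space-like points at all. The obstacle is not invertibility of a contraction, and no choice of coordinates on $M$ removes it.

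The missing idea is that $\mathcal{C}$ preserves $\mathbf{w}$-Lagrangian subspaces but does not preserve $M$. So the compact dual must be realized in the transported model $G_c/K\cong\mathcal{C}(M^{\bar{\eta}})=\{L\in\mathcal{C}(M):\mathbf{w}|_L=0\}$, which is what the paper does. There the argument is short:
\begin{itemize}
\item $\mathbf{h}|_L>0$ gives $L=L_X$ with $\|X\|<1$.
\item $\mathbf{w}|_L=0$ gives $X=\bar{X}^t$.
\item Hence $\mathcal{C}(L)=L_g$ with $g=(I-X)(I+X)^{-1}$ invertible and positive Hermitian.
\item The condition $\mathcal{C}(L)\in M$ (rather than $L\in M$) then gives $g\in G\cap H_n^{+}(\mathbb{F})=(G^{\eta})_0$.
\end{itemize}
The converse is the same computation reversed. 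In your notation, you must require $\mathcal{C}(L)=L_A\in M$, which gives $A\in G$.

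This also settles your loose ends. It is $\mathcal{C}(L)$, not $L$, that avoids $V_1$ and $V_2$; for $A=I$ your $L$ is $V_1\oplus 0$. The base point $\mathcal{C}(L_I)=V_1\oplus 0$ is space-like, so no connectedness argument is needed. As written, your proof essentially works only for $GL(n,\mathbb{F})$, where $\mathcal{C}(M)=M$.
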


\begin{rmk}
In this paper, we restrict our attention to classical Lie groups. A natural question arises: can the construction be extended to exceptional Lie groups? In the classical setting,
$G$ acts on $\mathbb{F}^n$ and $\overline{G}$ consists of
certain subspaces in $\mathbb{F}^{2n}$, where $\mathbb{F}=\mathbb{R},\mathbb{C},\mathbb{H}$. If one attempts to apply the same construction to exceptional Lie groups,
the group $G$ would act on $\mathbb{O}^n$ and $\overline{G}$ would be formed by subspaces in $\mathbb{O}^{2n}$. However, geometries related to $\mathbb{O}^n$ is only well-defined for $n\leq 3$, which forces $n=1$
in the case of $\mathbb{O}^n$
and $\mathbb{O}^{2n}$. Consequently, our previous framework can only be applied to the exceptional group
$G_2$. Indeed, related results concerning the compactification of $G_2$ are available in the literature \cite{KP}\cite{Man2}, by S.Y. Kim,  K.D. Park and L. Manivel.
\end{rmk}

The paper is structured as follows: Section 2 provides Grassmannian descriptions of $G$, $\mathfrak{g}$ and $\overline{G}$;
Section 3 extends the Cartan involution on $G$ to $\overline{G} $ and interprets it within the framework of Grassmannian geometry;
Section 4 investigates noncompact Riemannian symmetric spaces from a Grassmannian viewpoint.
In the Appendix, the group structure of $G$ is interpreted through the geometry of subspaces.

\bigskip

$\mathbf{Acknowledgements.}$  The first author is supported by the National Natural
Science Foundation of China (No. 12471045). The second author is
supported by  research grants from the Research Grants Council of the Hong
Kong Special Administrative Region, China (No. 14302224, No. 14305923, No. 14306322).

\bigskip

\section{{\large Grassmannian perspectives of classical Lie groups}}
\subsection{General linear groups}
We begin by considering the general linear groups $G=GL(n,\mathbb{F})=Aut(V)$, where $\mathbb{F}=\mathbb{R},\mathbb{C},\mathbb{H}$ and $V \cong \mathbb{F}^n$ denotes the corresponding
$n$-dimensional vector space over $\mathbb{F}$ (or $\mathbb{F}$-module when $\mathbb{F}=\mathbb{H}$). Let $V_1$ and $V_2$  be two copies of $V$, and define $M=Gr(n, V_1\oplus V_2)$ as the
Grassmannian of $n$-dimensional $\mathbb{F}$-linear subspaces in $V_1 \oplus V_2$. Recall that
$M$ carries the structure of a Riemannian symmetric space. In the three cases over $\mathbb{R}$, $\mathbb{C}$ and $\mathbb{H}$, they are respectively isomorphic to $O(2n)/O(n) \times O(n)$, $U(2n)/U(n) \times U(n)$, and $Sp(2n)/Sp(n) \times Sp(n)$.

By the standard geometry of Grassmannians, for any point $X \in M$, the tangent space at $X$  is intrinsically described as $T_{X}M=Hom(X, (V_1 \oplus V_2)/X)$.
To obtain a more explicit representation, we endow $V$ (and consequently $V_1 \oplus V_2$) with a standard inner product. This choice induces a canonical orthogonal decomposition
$V_1 \oplus V_2=X \oplus X^{\perp}$, which in turn yields an isomorphism $(V_1 \oplus V_2)/X \cong X^\perp$, where
$X^{\perp}$ denotes the orthogonal complement of $X$ in $V_1\oplus V_2$.

\bigskip
Notations: Throughout this paper, $X^{\perp}$ always denotes the orthogonal complement of $X$ in $V_1\oplus V_2$ with respect to the standard inner product on $V_1\oplus V_2$.

\bigskip
Hence, the tangent space can be equivalently written as $T_{X}M=Hom(X, X^{\perp})$. Consider the natural embedding
\[
f: Hom(X,X^\perp) \hookrightarrow M, ~  \varphi \mapsto
graph(\varphi):=\{x + \varphi(x): x \in X\}
\]
we obtain the following identification \cite{C}:
\[
T_{X}M=Hom(X, X^{\perp}) \cong \{L \in
M: L \bigcap X^\perp=\{0\}\}.
\]

We define the associated ``graph-like Grassmannian" as
\[
M_{X} \triangleq \{L \in
M: L \bigcap X^{\perp}=\{0\}\}.
\]
The terminology reflects that every $ L \in M_X $ corresponds uniquely to the graph of a linear map from $ X $ to $ X^{\perp} $.
It is obvious that:
\begin{lem}
$M_{X} \cong \mathfrak{g}$, where $\mathfrak{g}$ is the Lie algebra of $G$.
\end{lem}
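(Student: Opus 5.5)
The plan is to compose two bijections. The first is the graph map $f$ already introduced, which identifies $Hom(X,X^\perp)$ with $M_X$. The second is a linear isomorphism $Hom(X,X^\perp)\cong \mathfrak{gl}(n,\mathbb{F})=\mathfrak{g}$. The identification will be as $\mathbb{R}$-vector spaces (indeed as manifolds); it is not claimed to respect the bracket.

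\textbf{Step 1: $f$ is a bijection onto $M_X$.} Given $\varphi\in Hom(X,X^\perp)$, the subspace $graph(\varphi)$ is $\mathbb{F}$-linear of dimension $n$ and meets $X^\perp$ only in $0$. Indeed, if $x+\varphi(x)\in X^\perp$, then $x\in X\cap X^\perp=\{0\}$. So $graph(\varphi)\in M_X$.

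Conversely, let $L\in M_X$. Consider the orthogonal projection $\pi_X:V_1\oplus V_2\to X$ restricted to $L$. Its kernel is $L\cap X^\perp=\{0\}$, so by dimension count ($\dim L=\dim X=n$) the restriction is an isomorphism $L\to X$. Set $\varphi=\pi_{X^\perp}\circ(\pi_X|_L)^{-1}$. Then $L=graph(\varphi)$, and $\varphi$ is uniquely determined by $L$. For $\mathbb{F}=\mathbb{H}$ the same argument works with right $\mathbb{H}$-modules, since the orthogonal projections for the standard quaternionic inner product are $\mathbb{H}$-linear.

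\textbf{Step 2: identify $Hom(X,X^\perp)$ with $\mathfrak{g}$.} Since $\dim X=\dim X^\perp=n$, choosing $\mathbb{F}$-bases gives $Hom_{\mathbb{F}}(X,X^\perp)\cong M_n(\mathbb{F})=\mathfrak{gl}(n,\mathbb{F})$, which is the Lie algebra of $GL(n,\mathbb{F})$. To make this less arbitrary, I would first note that $O(2n)$, $U(2n)$ or $Sp(2n)$ acts transitively on $M$ and carries $X^\perp$ to the complement of the image. Hence it suffices to treat $X=V_1\oplus 0$, where $X^\perp=0\oplus V_2$, and there $Hom(V_1,V_2)=End(V)=\mathfrak{g}$ canonically. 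For general $X$, transport this identification by an isometry $k$ with $kX_0=X$; the result is canonical up to the stabilizer.

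\textbf{Main obstacle.} The only delicate point is the meaning of ``$\cong$''. I would state explicitly that the identification is a linear, equivalently diffeomorphic, identification of the open dense chart $M_X$ with the vector space $\mathfrak{g}$. I would also check openness: $L\cap X^\perp=\{0\}$ is the nonvanishing of a determinant, namely the determinant of the projection to $X$. Finally, I would check that $f$ is smooth with smooth inverse, since $\varphi$ depends rationally on the Plücker-type coordinates. Everything else is routine linear algebra.
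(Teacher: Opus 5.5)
For $G=GL(n,\mathbb{F})$ and $M=Gr(n,V_1\oplus V_2)$, which is the setting you work in, your argument is correct and is the paper's own (the paper just calls it ``obvious''). The graph map identifies $M_X$ with $T_XM=Hom(X,X^\perp)$, and $Hom(X,X^\perp)\cong\mathfrak{gl}(n,\mathbb{F})$ since $\dim X=\dim X^\perp=n$.

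However, the paper restates this lemma word for word in Section 2.2 for $G=Aut(V,b)$ with $M=IG(V_1\oplus V_2,b\oplus-b)$. That is the version it writes an actual proof for, and your proposal does not reach it. There the graph map only embeds $M_X$ into $Hom(X,X^\perp)$. Write $\mathbf{b}=b\oplus-b$; since $X$ and $X^\perp$ are both $\mathbf{b}$-isotropic, the graph of $\varphi$ lies in $M$ iff $\mathbf{b}(\varphi x,y)+\mathbf{b}(x,\varphi y)=0$ for all $x,y\in X$. So $M_X$ corresponds under $f$ to a proper linear subspace of $Hom(X,X^\perp)\cong\mathfrak{gl}(n,\mathbb{F})$, and identifying that subspace with $\mathfrak{g}$ is the whole content of the lemma. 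Your normalization $X=V_1\oplus 0$ is also unavailable, because $V_1\notin M$ when $b$ is nondegenerate.

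The missing step is the paper's computation. Take the base point $X=\{(x,x):x\in V\}\in M$, so that $X^\perp=\{(x,-x)\}$. Every $n$-dimensional $L$ with $L\cap X^\perp=\{0\}$ is uniquely $L'_A=\{(x+Ax,x-Ax)\}$ for some $A\in M_n(\mathbb{F})$, and
\[
\mathbf{b}\bigl((x+Ax,x-Ax),(y+Ay,y-Ay)\bigr)=2\bigl(b(Ax,y)+b(x,Ay)\bigr).
\]
Hence $L'_A\in M$ iff $A\in\mathfrak{g}$. To pass to an arbitrary $X\in M$, replace your $O(2n)$, $U(2n)$, $Sp(2n)$ reduction by homogeneity under the compact group preserving both $h\oplus h$ and $b\oplus-b$. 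That group acts transitively on $M$ and preserves $\perp$.
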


For any pair $X, Y \in M$ satisfying  $X \cap Y=\{0\}$,  we define the ``double graph-like Grassmannian" as
\[
M_{XY} \triangleq \{L \in M: L \bigcap X^\perp=L \bigcap Y^\perp=\{0\}\}.
\]
The terminology is motivated by the fact that any $L \in M_{XY}$ can be simultaneously represented as the graph of a linear map from
$X$ to $X^\perp$ and as the graph of a linear map from $Y$ to $Y^\bot$. Consequently, we obtain the identification

\begin{thm}
$M_{XY} \cong G$.
\end{thm}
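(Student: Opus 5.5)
Since $X\cap Y=\{0\}$ and $\dim X=\dim Y=n$, we have $V_1\oplus V_2=X\oplus Y$. This lets us reduce to the model case $X=V_1\oplus 0$, $Y=0\oplus V_2$. Choose an $\mathbb{F}$-linear isomorphism $A$ of $V_1\oplus V_2$ with $A(V_1)=X$ and $A(V_2)=Y$; it induces a bijection of $M$. The defining conditions of $M_{XY}$, however, involve $X^\perp$ and $Y^\perp$, not $Y$ and $X$. So the first step is to rewrite them. A subspace $L$ with $\dim L=n$ satisfies $L\cap X^\perp=\{0\}$ if and only if the orthogonal projection $L\to X$ is an isomorphism, which holds if and only if $L$ is the graph of some $\varphi\in Hom(X,X^\perp)$. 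The projection $L\to X$ has kernel $L\cap X^\perp$, and I will use this kernel description directly rather than the graph description.

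Next, I rewrite the condition using the complementary decomposition $X\oplus Y$. For an $n$-dimensional $L$, we have $L\cap X^\perp=0$ if and only if $L^\perp\cap X=0$. Similarly $L\cap Y^\perp=0$ if and only if $L^\perp\cap Y=0$. Hence $L\mapsto L^\perp$ is a diffeomorphism of $M$ carrying $M_{XY}$ onto
\[
\{L'\in M: L'\cap X=L'\cap Y=\{0\}\}.
\]
This is the familiar ``double graph'' set relative to the splitting $X\oplus Y$. Any such $L'$ is the graph of a linear map $g:X\to Y$, since $L'\cap Y=0$. The map $g$ is injective since $L'\cap X=0$, hence an isomorphism because the dimensions agree. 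Conversely, every isomorphism $g:X\to Y$ has a graph meeting both $X$ and $Y$ trivially.

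Finally, I compose with fixed identifications $X\cong V\cong Y$, for example via $A$. This gives a bijection $L'\leftrightarrow g\in GL(V)=G$. In the basic case $X=V_1$, $Y=V_2$ it is literally $g\mapsto graph(g)$, recovering Theorem 1.1(i). Smoothness in both directions follows because the graph chart $Hom(X,Y)\to M$ is an open embedding, and $GL$ is open in $Hom$. The identification is canonical only up to the choice of $X\cong Y$, which is all that ``$\cong$'' asserts here.

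The main obstacle I expect is the equivalence $L\cap X^\perp=0 \iff L^\perp\cap X=0$. I will prove it by dimension counting, in the form $(L+X^\perp)^\perp=L^\perp\cap X$, so that
\[
\dim(L+X^\perp)=2n \iff L^\perp\cap X=0,
\]
and note that $\dim(L+X^\perp)=2n$ is equivalent to $L\cap X^\perp=0$ since $\dim L+\dim X^\perp=2n$. This needs care over $\mathbb{H}$, where one should use right-module dimensions and the quaternionic Hermitian inner product, but the argument is identical.
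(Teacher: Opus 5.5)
Your proof is correct, but it takes a different route from the paper's. Since $M_{XY}$ has no a priori group structure, your diffeomorphism $M_{XY}\to GL(V)$ is all the statement needs.

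The paper (Appendix, Lemma 5.1 and Corollary 5.1) keeps $L$ itself as the object. The conditions $L\cap X^\perp=L\cap Y^\perp=\{0\}$ make the orthogonal projections $\pi_X|_L$ and $\pi_Y|_L$ isomorphisms, which gives $\varphi_L=\pi_Y|_L\circ(\pi_X|_L)^{-1}:X\to Y$. The paper then moves $X$ to $V_1$ by $O(2n)$ and writes $Y=\{(Ay,y)\}$, $L=\{(x,Bx)\}$, so that $L\in M_{XY}$ iff $A^t+B$ is invertible. It computes $\varphi_L$ through the projection matrix $P_Y$ and checks that $L\mapsto\varphi_{L_0}^{-1}\circ\varphi_L$ is a bijection onto $Aut(X)$, compatible with the ternary law $\varphi_{L_2}\circ\varphi_{L_0}^{-1}\circ\varphi_{L_1}=\varphi_{L'}$.

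You instead apply the involution $L\mapsto L^\perp$. This turns the orthogonality conditions into $L'\cap X=L'\cap Y=\{0\}$, and you then read off $Iso(X,Y)$ from the affine chart of the splitting $X\oplus Y$.

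What each route buys:
\begin{itemize}
\item Your argument is coordinate-free and uniform in $\mathbb{F}$, where the paper does $\mathbb{R}$ and says the other cases are similar. It gives smoothness at no extra cost.
\item It also shows that $\perp$ maps $M_{XY}$ onto the inner-product-free set $\widetilde{M}_{XY}$ of Remark 2.1, so the two versions of the theorem are literally equivalent.
\item The paper's argument gives up that economy. In exchange it describes the group law directly in terms of the configuration $(X,Y,L)$, with explicit formulas such as $B'=(A^t+B_2)(A^t+B_0)^{-1}(A^t+B_1)-A^t$, which is the point of its Appendix.
\end{itemize}

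The two constructions are tied together. Suppose $L^\perp=\mathrm{graph}(g)$ with $g:X\to Y$. For $v\in L$, pairing $v$ with $x+gx$ gives $\pi_Xv=-g^*\pi_Yv$, where $g^*:Y\to X$ is the adjoint for the restricted inner products. Hence $\varphi_L=-(g^*)^{-1}$. The map $g\mapsto-(g^*)^{-1}$ preserves the operation $(g_0,g_1,g_2)\mapsto g_2g_0^{-1}g_1$. So the ternary operation of Lemma 5.1, and hence the group structure on $M_{XY}$ for a chosen base point $L_0$, is the same whether computed with $\varphi_L$ or with your $g$.

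One small inaccuracy: in the case $X=V_1$, $Y=V_2$, your identification matches $g$ with $\mathrm{graph}(g)^\perp=\mathrm{graph}(-(\bar{g}^t)^{-1})$, not with $\mathrm{graph}(g)$ as you claim. This is harmless, because there $X^\perp=Y$ and $Y^\perp=X$, so the $\perp$ step can simply be omitted.
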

\begin{proof}
The proof, together with the description of the group structure on $M_{XY}$ in terms of the geometry of subspace configurations in $V_1\oplus V_2$, is provided in the Appendix.
\end{proof}

\begin{rmk}
In fact, the inner product is not required to recover the group $G$ inside $M$. Briefly, for any pair $X, Y \in M$ satisfying  $X \cap Y=\{0\}$,  one may define the double graph-like Grassmannian as
\[
\widetilde{M}_{XY}  \triangleq \{L \in M: L \bigcap X=L \bigcap Y=\{0\}\}.
\]
Then  each $ L \in \widetilde{M}_{XY} $ corresponds uniquely to a graph  from $ X $ to $Y $ and also to a graph from $Y$ to $X$.

For any $L \in M$, we may decompose it with respect to the subspaces  $X$ and $Y$. Moreover, if $X \in \widetilde{M}_{XY}$, the decomposition yields an isomorphism
\[
\varphi_{L} :  ~  ~  X  ~  ~  \xrightarrow[]{\sim}  ~  ~  L   ~  ~   \xrightarrow[]{\sim}   ~   ~ Y.
\]
Using arguments analogous to those in the Appendix, we obtain $\widetilde{M}_{XY} \cong G$.
\end{rmk}

In particular, we have the identification $G \cong M_{V_1V_2}$ and any $g \in G$ can be viewed as a graph from $V_1$ to $V_2$. This yields a natural embedding:
\[
\mathfrak{i}: G \hookrightarrow M, ~  \mathfrak{i}(g)= L_g:=\{(x, gx): x \in V\}.
\]
The image $i(G)$ is an open dense subset of the compact Grassmannian $M=Gr(n, V_1\oplus V_2)$.  Consequently,
$M$ defines a compactification of the noncompact group  $G=Aut(V)$. We denote this compactification by $\overline{G} \triangleq M$, and refer to it as the Grassmannian compactification of
$G$. Its properties have been extensively studied in \cite{FL}\cite{GGKW}\cite{He}.

\subsection{Other classical Lie groups}
We continue to consider other classical Lie groups $G=Aut(V,b)$, where $V \cong \mathbb{F}^n$ ($\mathbb{F}=\mathbb{R},\mathbb{C},\mathbb{H}$) with $b$ a non-degenerate $\mathbb{R}$-bilinear form on it.
Concretely, $G=Aut(V,b)$  belongs to one of the following seven types\cite{Har}\cite{K} ($n=p+q=2m$):

\hspace{-2mm}$V=\mathbb{R}^{n}, b(u,v)=u^tI_{p,q}v, O(p,q)=\{g \in GL(p+q,\mathbb{R})| g^tI_{p,q}g=I_{p,q}\}$

\hspace{-2mm}$V=\mathbb{C}^{n}, b(u,v)=\bar{u}^tI_{p,q}v, U(p,q)=\{g \in GL(p+q,\mathbb{C})| \bar{g}^tI_{p,q}g=I_{p,q}\}$

\hspace{-2mm}$V=\mathbb{H}^{n}, b(u,v)=\bar{u}^tI_{p,q}v, Sp(p,q)=\{g \in GL(p+q,\mathbb{H})| \bar{g}^tI_{p,q}g=I_{p,q}\}$

\hspace{-2mm}$V=\mathbb{R}^{n}, b(u,v)=u^tJ_{m,m}v, Sp(2m,\mathbb{R})=\{g \in GL(2m,\mathbb{R})| g^tJ_{m,m}g=J_{m,m}\}$

\hspace{-2mm}$V=\mathbb{C}^{n}, b(u,v)=u^tJ_{m,m}v, Sp(2m,\mathbb{C})=\{g \in GL(2m,\mathbb{C})| g^tJ_{m,m}g=J_{m,m}\}$

\hspace{-2mm}$V=\mathbb{C}^{n}, b(u,v)=u^tv, O(n,\mathbb{C})=\{g \in GL(n,\mathbb{C})| g^tg=I\}$

\hspace{-2mm}$V=\mathbb{H}^{n}, b(u,v)=\bar{u}^tiv, O^*(2n)=\{g \in U(m,m)| g^tI_{m,m}J_{m,m}g=I_{m,m}J_{m,m}\}$
here $I_{p,q}=\left(
\begin{array}{cc}
I_p &  \\
  & -I_q\end{array} \right)$, $J_{m,m}=\left(
\begin{array}{cc}
  &  I_m\\
-I_m  & \end{array} \right)$. Note that for any $g \in G$, the elements  $\bar{g}$, $g^t $ and $\bar{g}^t $ also belong to $G$.

\bigskip

Let $(V_1,b)$ and $(V_2,b)$  be two copies of $(V,b)$, and denote $M=IG(V_1 \oplus V_2, b\oplus-b)$ as the Grassmannian of maximal isotropic subspaces in $V_1 \oplus V_2$ with respect to the form $b\oplus-b$, i.e.
\[
M=IG(V_1 \oplus V_2, b\oplus-b)=\{L \in Gr(n, V_1\oplus V_2): b\oplus-b|_{L}=0\}.
\]

As shown in Theorem 0.3 of \cite{He}, we obtain the following result:
\begin{prop} $M$ is a Riemannian symmetric space and
\begin{enumerate}
    \item $M \cong O(p+q)$ for $G = O(p, q)$;
    \item $M \cong U(p + q)$ for $G = U(p,q)$;
    \item $M \cong Sp(p + q)$ for $G = Sp(p,q)$;
    \item $M \cong U(2m)/O(2m)$ for $G = Sp(2m, \mathbb{R})$;
    \item $M \cong Sp(2m)/U(2m)$ for $G = Sp(2m, \mathbb{C})$;
    \item $M \cong O(2n)/U(n)$ for $G = O(n,\mathbb{C})$;
    \item $M \cong U(2m)/Sp(m)$ for $G = O^*(2m)$.
\end{enumerate}
\end{prop}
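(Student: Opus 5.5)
The plan is to realize $M$ as a single orbit of a compact group and read off the stabilizer. Equip $V$ with its standard inner product $h$, and fix the base point $L_0 = \mathfrak{i}(e) = \{(x,x): x\in V\}$, the diagonal. It is isotropic for $b\oplus -b$ and has dimension $n$, so $L_0 \in M$.

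First I identify a compact group $U$ acting on $V_1\oplus V_2$ that preserves both $h\oplus h$ and $b\oplus -b$. The trick is to diagonalize $b$ against $h$. Write $b(u,v)=h(u,Bv)$, where $B$ is one of $I_{p,q}$, $J_{m,m}$, $I$, or left multiplication by $i$, possibly twisted by conjugation. In each case $B$ is unitary with $B^2=\pm 1$ (or $B\bar B=\pm1$). Its eigenspace decomposition then splits $V_1\oplus V_2$ into two pieces, $W_+$ and $W_-$, on which $b\oplus -b$ is ``positive'' and ``negative'' respectively.

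\begin{itemize}
\item For the Hermitian types $O(p,q)$, $U(p,q)$, $Sp(p,q)$, set $W_+=\mathbb{F}^p\oplus\mathbb{F}^q$ (the $+$ part of $V_1$ and the $-$ part of $V_2$) and let $W_-$ be the rest. Then a maximal isotropic $L$ is exactly the graph of an isometry $W_+\to W_-$ for $h$. This gives $M\cong O(n),U(n),Sp(n)$ directly, mimicking the \cite{HL} description recalled in the introduction.
\item For the remaining four types I will instead show that the compact group $U=Aut(V_1\oplus V_2,h\oplus h)\cap Aut(V_1\oplus V_2, b\oplus-b)$ is $U(2m)$, $Sp(2m)$, $O(2n)$, $U(2m)$ respectively. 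This is a standard ``compact form'' computation: a skew form plus a compatible Hermitian form yields a quaternionic or complex structure, and so on.
\end{itemize}

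Next I prove transitivity of $U$ on $M$. Given $L\in M$, choose an $h$-orthonormal basis of $L$ adapted to $b$. Because $b\oplus-b$ restricted to $L\oplus JL$ is nondegenerate, where $J=B\oplus -B$ is the compatible structure, $L$ is a ``Lagrangian'' for the symmetric space structure. Witt's theorem in its compact version then shows that any two such $L$ are related by an element of $U$.

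With transitivity in hand, I compute the stabilizer $U_{L_0}$. It is $\{(k,k)\}$ twisted appropriately, and yields $O(2m)$, $U(2m)$, $U(n)$, $Sp(m)$ in the four cases. This gives $M\cong U/U_{L_0}$.

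Finally, I exhibit the symmetric-space structure. The involution $\sigma$ of $U$ whose fixed group is $U_{L_0}$ is conjugation by reflection in $L_0$, namely $(x,y)\mapsto(y,x)$. I will check that this reflection lies in the normalizer of $U$, which makes $M$ a Riemannian symmetric space. Its metric is the one induced from the ambient Grassmannian $Gr(n,V_1\oplus V_2)$, which makes $M$ a totally geodesic submanifold as well.

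I expect the main obstacle to be transitivity, particularly proving that every maximal isotropic subspace has dimension exactly $n$ and lies in one $U$-orbit. I would handle this case by case through the graph description over $W_+$, which works uniformly once the correct splitting $W_\pm$ is chosen. Since Theorem 0.3 of \cite{He} is cited for this result, one could also simply invoke it.
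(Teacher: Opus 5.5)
Your strategy is sound, but it is not the paper's route. The paper does essentially no computation. The statement is He's Theorem 0.3, and the ``alternative proof'' matches each $M$ with an entry in the list of generalized Grassmannians in \cite{HL}. Only case (4) is actually checked: a permutation of coordinates turns $\mathrm{diag}(J_{m,m},-J_{m,m})$ into $J_{2m,2m}$, so $M$ is literally the standard real Lagrangian Grassmannian $U(2m)/O(2m)$.

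You instead do an intrinsic orbit--stabilizer computation for $U=Aut(h\oplus h)\cap Aut(\mathbf{b})$ at the diagonal $L_0$; your identifications of $U$ are correct. You also exhibit the symmetric involution explicitly as conjugation by the swap $s(x,y)=(y,x)$. Since $s^*\mathbf{b}=-\mathbf{b}$, the swap normalizes $U$, preserves $M$, and has centralizer $U_{L_0}$ in $U$. This is self-contained and uniform, and it gives more than the paper has at this point. Because $s$ is $+1$ on $L_0$ and $-1$ on $L_0^{\perp}$, it is the geodesic symmetry of $Gr(n,V_1\oplus V_2)$ at $L_0$. Its conjugates $gsg^{-1}$, $g\in U$, are the symmetries at the other points of $M$, and they preserve $M$. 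Hence $M$ is totally geodesic and symmetric for the induced metric, which is exactly what Section 3 needs and obtains only later, via the involution $L\mapsto L^{\perp_{\mathbf{b}}}$ and \cite{QT}. The paper's route buys brevity and places $M$ in the framework of \cite{HL}, but it outsources all of the content.

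Two steps of your proposal need repair. First, transitivity. The positive/negative splitting $W_\pm$ exists only for $O(p,q)$, $U(p,q)$, $Sp(p,q)$. For the other four types, $\mathbf{b}(v,v)$ is identically $0$, complex-valued, or purely imaginary. So your fallback claim that the graph description over $W_+$ ``works uniformly'' fails there, and ``Witt's theorem in its compact version'' is not an argument as stated. Here is an argument that works for all seven types.
\begin{itemize}
\item Write $\mathbf{B}=\mathrm{diag}(B,-B)$ and $\mathbf{b}(u,v)=(h\oplus h)(\mathcal{J}u,v)$. Take $\mathcal{J}=\bar{\mathbf{B}}^{t}$ when $\mathbf{b}$ is sesquilinear or real, and $\mathcal{J}u=\overline{\mathbf{B}^{t}u}$ (antilinear) when $\mathbf{b}$ is complex bilinear.
\item Then $\mathcal{J}$ is an isometry up to conjugation, $\mathcal{J}^2=\pm1$, and $U$ is exactly the set of $(h\oplus h)$-isometries commuting with $\mathcal{J}$.
\item $\mathbf{b}|_L=0$ if and only if $\mathcal{J}L\perp L$, and by dimension count this means $\mathcal{J}L=L^{\perp}$.
\item Let $e^0_j$ and $e_j$ be orthonormal bases of $L_0$ and $L$, and let $\phi$ be the $\mathbb{F}$-linear map $e^0_j\mapsto e_j$. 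Then $\phi\oplus\mathcal{J}\phi\mathcal{J}^{-1}$ on $L_0\oplus\mathcal{J}L_0$ sends the orthonormal basis $\{e^0_j,\mathcal{J}e^0_j\}$ to $\{e_j,\mathcal{J}e_j\}$ and commutes with $\mathcal{J}$. So it lies in $U$ and carries $L_0$ to $L$.
\end{itemize}

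Second, the stabilizer. The same decomposition gives $U_{L_0}=\{\phi\oplus\mathcal{J}\phi\mathcal{J}^{-1}:\phi\in Aut(L_0,h)\}$, that is, $O(2m)$, $U(2m)$, $U(n)$, $Sp(m)$ in the four cases. It is not ``$\{(k,k)\}$'' in $V_1\oplus V_2$ coordinates. For $Sp(2m,\mathbb{R})$, the elements $\mathrm{diag}(k,k)$ of $U$ form only $K\cong U(m)$, whereas $U_{L_0}\cong O(2m)$ has dimension $2m^2-m=\dim U(2m)-\dim M$.

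Finally, $\dim L=n$ is part of the definition of $M$, so that half of your anticipated obstacle does not arise.
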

\begin{proof}
We give an alternative proof of this lemma here. In \cite{HL}, the second author gave a uniform description of compact Riemannian symmetric spaces as generalized Grassmannians. From the classification list of \cite{HL}, all
the above $M=IG(V_1 \oplus V_2, b\oplus-b)$ appear as generalized Grassmannians and their realizations as quotients of Lie groups are listed there. As an illustration, consider case
(4): $M \cong U(2m)/O(2m)$ for $G = Sp(2m, \mathbb{R})$. In this case
\[
M=IG(V_1 \oplus V_2, b\oplus-b)=\{L \in Gr(2m, 4m): L^t diag(J_{m,m},-J_{m,m})L=0\}
\]
\[
U(2m)/O(2m) \cong \{L \in Gr(2m, 4m): L^t J_{2m,2m}L=0\}
\]
Since $diag(J_{m,m},-J_{m,m})$ is similar to $J_{2m,2m}$, we obtain $M \cong U(2m)/O(2m)$.
\end{proof}

Also in Lemma 2.1 of \cite{He}, the author showed that:
\begin{lem}
For any $L \in M$, $\dim(L \bigcap V_1)=\dim(L \bigcap V_2)$.
\end{lem}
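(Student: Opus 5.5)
Write $\beta=b\oplus -b$ on $V_1\oplus V_2$, and set $A=L\cap V_1$, $B=L\cap V_2$ for $L\in M$. The idea is to compute $\dim A$ and $\dim B$ through the projections of $L$ to the two summands, and then use maximal isotropy together with nondegeneracy of $b$ to relate the kernel of one projection to the image of the other.

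Let $\pi_1,\pi_2$ be the projections of $V_1\oplus V_2$ onto $V_1$ and $V_2$. Then $\ker(\pi_2|_L)=A$ and $\ker(\pi_1|_L)=B$, so
\[
\dim \pi_2(L)=n-\dim A,\qquad \dim \pi_1(L)=n-\dim B .
\]
It therefore suffices to show $\dim \pi_1(L)=n-\dim A$, or equivalently $\pi_1(L)=A^{\perp_b}$. Here $A$ is viewed as a subspace of $V$ and $\perp_b$ is the orthogonal with respect to the nondegenerate form $b$; for $\mathbb{F}=\mathbb{H}$, or for Hermitian forms, this is taken in the appropriate $\mathbb{F}$-linear sense, where $\dim A^{\perp_b}=n-\dim A$ still holds.

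The inclusion $\pi_1(L)\subset A^{\perp_b}$ uses only isotropy. Take $(x,y)\in L$ and $(a,0)\in A$. Then
\[
0=\beta((a,0),(x,y))=b(a,x),
\]
so $x\perp_b A$. Since $b$ may be symmetric, skew or Hermitian, orthogonality is symmetric, and we may write $x\in A^{\perp_b}$ unambiguously.

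The reverse inclusion is where maximality of $L$ is needed. Here $\dim L=n$ is half of $\dim(V_1\oplus V_2)$ and $\beta$ is nondegenerate, so $L^{\perp_\beta}=L$.
\begin{itemize}
\item Compute the $\beta$-orthogonal of $\pi_1(L)\oplus 0$: a vector $(u,0)$ is $\beta$-orthogonal to all of $L$ iff $b(u,x)=0$ for all $x\in\pi_1(L)$.
\item Hence $(\pi_1(L)^{\perp_b}\oplus 0)\subset L^{\perp_\beta}=L$, which gives $\pi_1(L)^{\perp_b}\subset A$.
\item Taking $b$-orthogonals and using nondegeneracy, $A^{\perp_b}\subset \pi_1(L)^{\perp_b\perp_b}=\pi_1(L)$.
\end{itemize}
So $\pi_1(L)=A^{\perp_b}$, whence $\dim \pi_1(L)=n-\dim A$. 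Comparing with $\dim\pi_1(L)=n-\dim B$ gives $\dim A=\dim B$. As a consistency check, the symmetric argument gives $\pi_2(L)=B^{\perp_b}$.

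The main obstacle is the identity $L^{\perp_\beta}=L$ for every $L\in M$, including the cases where $b$ is indefinite, such as $O(p,q)$. The definition of $M$ used here consists of isotropic subspaces of dimension exactly $n$. Since $\beta$ is nondegenerate on a space of dimension $2n$, we have $\dim L^{\perp_\beta}=2n-n=n$, and isotropy gives $L\subset L^{\perp_\beta}$, so equality holds. The only care required is to check this dimension count, and the double-orthogonal identity $W^{\perp_b\perp_b}=W$, uniformly in the quaternionic and Hermitian-skew cases by working with real dimensions of $\mathbb{F}$-subspaces.
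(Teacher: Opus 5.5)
Your proof is correct and complete. The paper gives no argument of its own for this lemma; it only cites Lemma 2.1 of He's paper. Your proof therefore serves as a self-contained replacement for that citation.

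The route you take is the standard one:
\begin{enumerate}
\item Identify $L\cap V_1$ and $L\cap V_2$ as the kernels of $\pi_2|_L$ and $\pi_1|_L$.
\item Use $L^{\perp_\beta}=L$, which follows from isotropy plus the dimension count, to show $\pi_1(L)^{\perp_b}=L\cap V_1$.
\end{enumerate}
Every ingredient holds uniformly in all seven cases, including the quaternionic and skew-Hermitian ones. These ingredients are the reflexivity of $\epsilon$-Hermitian forms and the count $\dim W^{\perp_b}=n-\dim W$ for nondegenerate $b$.

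Two small remarks. First, you can drop the double-orthogonal step. Isotropy gives $A\subset\pi_1(L)^{\perp_b}$ and maximality gives $\pi_1(L)^{\perp_b}\subset A$, so $\dim A=n-\dim\pi_1(L)=\dim B$ directly. Second, state explicitly that nondegeneracy of $b$ is essential. It enters exactly through $\dim L^{\perp_\beta}=n$ and $\dim W^{\perp_b}=n-\dim W$. For $b=0$, the $GL(n,\mathbb{F})$ case where $M=Gr(n,V_1\oplus V_2)$, the statement fails, e.g.\ for $L=V_1$.
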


Similar to the case of general linear groups, for any point $X \in M$, the tangent space at $X$ is given by  \cite{C}:
\[
T_{X}M \cong \{L \in
M: L \bigcap X^\perp=\{0\}\}.
\]
We define the associated ``graph-like Grassmannian"  as
\[
M_{X} \triangleq \{L \in
M: L \bigcap X^{\perp}=\{0\}\}.
\]
Then the following lemma holds:

\begin{lem}
$M_{X} \cong \mathfrak{g}$, where $\mathfrak{g}$ is the Lie algebra of $G$.
\end{lem}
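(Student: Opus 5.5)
The plan is to first prove the statement for one convenient base point, the diagonal, by an explicit Cayley-type parametrization, and then move it to an arbitrary $X\in M$ by the symmetric-space structure of $M$ from Proposition 2.1.

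\emph{Base point.} Let $\Delta=\{(v,v):v\in V\}\subset V_1\oplus V_2$. Since $b(v,w)-b(v,w)=0$, $\Delta$ is isotropic for $b\oplus -b$, and it has dimension $n$, so $\Delta\in M$. With respect to the standard inner product, $\Delta^\perp=\{(v,-v):v\in V\}$. By the graph description of $T_XM$ recalled above, every $L\in M_\Delta$ is the graph of a unique $\mathbb{F}$-linear map $\Delta\to\Delta^\perp$. Writing such a map as $(x,x)\mapsto(Ax,-Ax)$ for a unique $A\in End_{\mathbb{F}}(V)$, we get
\[
L=L_A:=\{((1+A)x,(1-A)x):x\in V\}.
\]
Conversely, every $A$ gives an $n$-dimensional subspace $L_A$ with $L_A\cap\Delta^\perp=\{0\}$.

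\emph{Isotropy becomes linear.} Next compute the isotropy condition for $L_A$:
\[
b((1+A)x,(1+A)y)-b((1-A)x,(1-A)y)=2\bigl(b(Ax,y)+b(x,Ay)\bigr).
\]
The terms $b(x,y)$ and $b(Ax,Ay)$ cancel, so the quadratic part drops out. Hence $L_A\in M$ if and only if $b(Ax,y)+b(x,Ay)=0$ for all $x,y$, which is exactly the condition $A\in\mathfrak{g}=\mathfrak{aut}(V,b)$. I would check this separately for the sesquilinear and quaternionic forms in the list, making sure the $\mathbb{F}$-linearity conventions are respected. This gives a bijection $\mathfrak{g}\to M_\Delta$, $A\mapsto L_A$, which is a diffeomorphism onto the open set $M_\Delta$, and linear in the graph chart. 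So $M_\Delta\cong\mathfrak{g}$.

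\emph{Arbitrary base point.} For general $X\in M$, I would use that the isometry group of $M$ acts transitively (Proposition 2.1, via the realization of $M$ as a compact symmetric space). More concretely, I would take the subgroup $U$ of isometries of the standard inner product on $V_1\oplus V_2$ that preserve $b\oplus -b$, and show that $U$ acts transitively on $M$; this is the compact group appearing in the homogeneous models of Proposition 2.1. Choose $u\in U$ with $u\Delta=X$. Because $u$ is orthogonal/unitary, $u(\Delta^\perp)=X^\perp$. Because $u$ preserves $b\oplus -b$, it maps $M$ to itself. Therefore $u$ restricts to a bijection $M_\Delta\to M_X$, and composing with the previous step gives $M_X\cong\mathfrak{g}$.

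\emph{Main obstacle.} The hardest step is the transitivity of $U$ on $M$ in a form that is compatible with orthogonal complements. I would first try to read it off from the generalized-Grassmannian models in the proof of Proposition 2.1: in each case, conjugate $b\oplus-b$ to a standard form, such as $J_{2m,2m}$, by an inner-product-preserving change of basis, so that $U$ becomes the standard compact group acting transitively on Lagrangian or isotropic subspaces. If that fails in some case, a fallback is to use only the weaker conclusion $M_X\cong T_XM$ cited from \cite{C}, together with a dimension count from Proposition 2.1 showing $\dim M=\dim\mathfrak{g}$; this still identifies $M_X$ with $\mathfrak{g}$ as vector spaces.
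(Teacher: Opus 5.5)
Your proposal is correct and is essentially the paper's proof: reduce to the diagonal $X=\{(x,x):x\in V\}$, write every $L$ with $L\cap X^\perp=\{0\}$ as $\{(x+Ax,x-Ax):x\in V\}$, and note that the quadratic terms cancel, so $b\oplus -b|_L=0$ holds exactly when $A\in\mathfrak{g}$. Your justification of the reduction step is more careful than the paper's brief appeal to the symmetric-space structure: you use the compact group of $h\oplus h$-isometries preserving $b\oplus -b$, which acts transitively on $M$ and carries $\Delta^\perp$ to $X^\perp$. Your fallback is therefore not needed.
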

\begin{proof}
Since $M$ is a Riemannian symmetric space, there is a natural isomorphism between tangent spaces of two different points.
WLOG, we can take $X=\{(x,x): x \in V\}$, then $X^\perp=\{(x,-x): x \in V\}$. For any $n$-dimensional subspace $L$ in $V_1 \oplus V_2$,
$ L \bigcap X^\perp=\{0\}$ is equivalent to $L$ can be written as\footnote{To avoid confusion with the notation $L_g:=\{(x, gx): x \in V\}$, we use $L'_A$ here.} $L'_A=\{(x+Ax,x-Ax): x \in V\}$ for some $A \in M_{n \times n}(\mathbb{F})$.
By direct computation,
\[
b\oplus-b|_{L'_A}=0 \Longleftrightarrow A \in \mathfrak{g}
\]
Hence we have
\[
M_{X}=\{L'_A=\{(x+Ax,x-Ax): x \in V\}:  A \in \mathfrak{g} \} \cong \mathfrak{g}.
\]
\end{proof}

\begin{rmk}
For any $X \in M$, we have $X^\perp \in M$. Therefore, we may also express the Lie algebra as $\mathfrak{g} \cong \{L \in M: L \bigcap L_0=\{0\}\}$ for any $L_0 \in M$.
\end{rmk}

\bigskip
For any pair $X, Y \in M$ satisfying  $X \cap Y=\{0\}$, the set $M_{XY} \triangleq \{L \in M: L \bigcap X^\perp=L \bigcap Y^\perp=\{0\}\}$
is, in contrast to the general linear group case, not isomorphic to $G$. For example, if we take  $X=\{(x,x): x \in V\}$, $Y=\{(x,-x): x \in V\}$, then
$M_{XY}=\{L'_A=\{(x+Ax,x-Ax): x \in V\}:  A \in \mathfrak{g}, ~ A ~ invertible \}$,
which is not isomorphic to $G$.

Instead, we introduce the double graph-like Grassmannian
\[
M_{V_1V_2} \triangleq \{L \in M: L \bigcap V_1=L \bigcap V_2=\{0\}\}.
\]
(Note that $V_1, V_2 \notin M$.) For this space we obtain the following isomorphism:

\begin{thm}
$M_{V_1V_2} \cong G$.
\end{thm}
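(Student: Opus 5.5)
We prove the statement with the embedding $\mathfrak{i}: g \mapsto L_g=\{(x,gx): x\in V\}$ from the general linear case, showing that it maps $G=Aut(V,b)$ bijectively onto $M_{V_1V_2}$. The argument has three steps: describe the $n$-dimensional subspaces meeting both $V_1$ and $V_2$ trivially as graphs, translate isotropy of such a graph into the defining relation of $G$, and check smoothness of the correspondence.

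\textbf{Step 1 (graphs).} Let $L\subset V_1\oplus V_2$ be an $n$-dimensional $\mathbb{F}$-subspace. Consider the projection $\pi_1: L\to V_1$. Its kernel is $L\cap(0\oplus V_2)$, so $\pi_1$ is injective exactly when $L\cap V_2=\{0\}$. Since $\dim L=\dim V_1$, injectivity is the same as being an isomorphism. In that case $L=\{(x,gx)\}$ with $g=\pi_2\circ\pi_1^{-1}\in End(V)$, where $\pi_2:L\to V_2$ is the other projection. By the same reasoning, $L\cap V_1=\{0\}$ holds exactly when $\pi_2$ is an isomorphism, that is, when $g$ is invertible. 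Hence the $n$-dimensional subspaces meeting both $V_1$ and $V_2$ trivially are precisely the $L_g$ with $g\in GL(V)$, and $g\mapsto L_g$ is injective. This is essentially the $GL$ case already treated (Theorem 2.1 and the Appendix).

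\textbf{Step 2 (isotropy).} For $L_g$ we compute
\[
(b\oplus -b)\big((x,gx),(y,gy)\big)=b(x,y)-b(gx,gy).
\]
This expression vanishes for all $x,y$ exactly when $g$ preserves $b$. So an invertible $g$ has $L_g$ isotropic if and only if $g\in Aut(V,b)=G$. We must also check that $L_g$ is \emph{maximal} isotropic, so that it lies in $M$. It has dimension $n$, and since $b\oplus-b$ is nondegenerate on a space of dimension $2n$, every isotropic subspace has dimension at most $n$. Therefore $L_g\in M$.

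Step 2 also shows the invertibility hypothesis is automatic once we are in $M$. Any $g$ with $b(gx,gy)=b(x,y)$ is injective when $b$ is nondegenerate. Consequently, for $L\in M$ the single condition $L\cap V_2=\{0\}$ already forces $L\cap V_1=\{0\}$. This is consistent with Lemma 2.2, which says $\dim(L\cap V_1)=\dim(L\cap V_2)$.

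\textbf{Step 3 (diffeomorphism).} The map $\mathfrak{i}$ is the restriction to $G$ of the standard affine chart $End(V)\to Gr(n,V_1\oplus V_2)$, $A\mapsto graph(A)$. That chart is a diffeomorphism onto the open set $\{L: L\cap V_2=0\}$. Therefore $\mathfrak{i}$ is a diffeomorphism from $G$ onto the open subset $M_{V_1V_2}$ of $M$.

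\textbf{Main obstacle.} The only delicate point is the quaternionic case. There $V$ is a right $\mathbb{H}$-module, $g$ must be $\mathbb{H}$-linear, and $b$ is only $\mathbb{R}$-bilinear and sesquilinear. We need to confirm that the projections $\pi_1,\pi_2$ are $\mathbb{H}$-linear, and that the maximal isotropic dimension bound holds for each form in the list. For the latter we would appeal to Proposition 2.1, which identifies $M$ with the generalized Grassmannians of dimension-$n$ subspaces in each case.
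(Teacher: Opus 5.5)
Your proof is correct and is essentially the paper's argument. The Appendix also writes $L\in M_{V_1V_2}$ as $\left[\begin{array}{c} A\\ B\end{array}\right]$ with $A,B$ invertible, normalizes it to the graph $\left[\begin{array}{c} I\\ C\end{array}\right]$, and reads off $C\in Aut(V,b)$ from $b\oplus-b|_L=0$. The paper's only additional ingredient is its intrinsic group law $\varphi_{L_2}\circ\varphi_{L_0}^{-1}\circ\varphi_{L_1}=\varphi_{L'}$; for $X=V_1$, $Y=V_2$, $L_0=L_I$ this is just matrix multiplication because $\varphi_{L_g}=g$, so your bijection $g\mapsto L_g$ is a group isomorphism, and your diffeomorphism check and the remark that $L\cap V_2=\{0\}$ already suffices inside $M$ are correct additions the paper does not spell out.
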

\begin{proof}
See the Appendix for the proof.
\end{proof}

Similar to the case of general linear groups, from $G \cong M_{V_1V_2}$, any $g \in G$ can be viewed as a graph from $V_1$ to $V_2$. This yields a natural embedding:
\[
\mathfrak{i}: G \hookrightarrow M, ~  \mathfrak{i}(g)= L_g:=\{(x, gx): x \in V\}.
\]
The image $i(G)$ is an open dense subset of the compact Grassmannian $M=IG(V_1 \oplus V_2, b\oplus-b)$.  Consequently,
$M$ defines a compactification of the noncompact group  $G=Aut(V,b)$. We denote this compactification by $\overline{G} \triangleq M$, and refer to it as the Grassmannian compactification of $G$. Its properties have been extensively studied in \cite{FL}\cite{GGKW}\cite{He}.

\bigskip
\section{{\large Grassmannian perspectives of Cartan involutions}}
Let $G$ be a classical noncompact Lie group and $\overline{G}=M $ be its Grassmannian compactification.
In this section, we provide a description of the Cartan involution on $G$ in terms of Grassmannian geometry by extending it to $\overline{G} $.
\subsection{General linear groups}
Let $G=Aut(V)=GL(n,\mathbb{F})$ and $M=Gr(n, V_1\oplus V_2)$ as previously defined. On the vector space $V \cong \mathbb{F}^n$, we fix the standard inner product $h(u,v)=\bar{u}^tv$ for any $u,v \in \mathbb{F}^n$. Then
$K \triangleq Aut(V,h)= \{g \in G: h(gu,gv)=h(u,v), ~   \forall u,v \in V \}$ is a maximal
compact subgroup of $G$. Specifically,  $K=O(n),U(n),Sp(n)$ for $\mathbb{F}=\mathbb{R},\mathbb{C},\mathbb{H}$ respectively. The form $h$ will also induce an involutive automorphism of $G$, i.e. $\rho: G \rightarrow G$, $g \mapsto \rho(g)$ such that $h(u,v)=h(gu,\rho(g)v)$ for any $u,v \in V$.
It is obvious that $\rho(g)=(\bar{g}^t)^{-1}$ and it  is the standard Cartan involution on $G$ with fixed point set $G^{\rho}=K$.

On $V_1\oplus V_2$, we equip it with the direct sum inner product $h\oplus h$, which induces a Riemannian structure on the Grassmannian $M=Gr(n, V_1\oplus V_2)$. Explicitly, for any $X,Y \in M$, the geodesic distance between them is given by
$d(X,Y)=(\sum_{i=1}^{i=n} \theta_{i}^{2})^{\frac{1}{2}}$, here $0\leq \theta_{1} \leq \cdots \leq \theta_{n} \leq\frac{\pi}{2}$ are the principal angles between $X$ and $Y$
with respect to $h\oplus h$ \cite{Wong}\cite{YL}.

On $V_1\oplus V_2$, we also consider the indefinite inner product $\mathbf{h} =h\oplus -h$. Since $\mathbf{h}$ is non-degenerate, we have $(L^{\perp_{\mathbf{h}}})^{\perp_{\mathbf{h}}}=L$ for any $L \in M$, where $L^{\perp_{\mathbf{h}}}$
denotes the orthogonal complement of  $L$ in $V_1\oplus V_2$ with respect to $\mathbf{h}$.
Therefore, $\mathbf{h}$ induces an involution on $M$:
\[
\bar{\rho}: M \rightarrow M, ~  \bar{\rho}(L)=L^{\perp_{\mathbf{h}}}.
\]

\begin{thm}
(i) $\bar{\rho}$ is an involutive isometry on $M$ with respect to $d$.

(ii) The fixed point set of $M$ under $\bar{\rho}$, i.e. $M^{\bar{\rho}}=\{L \in M: \mathbf{h}|_L=0\}$, is a reflective submanifold of $M$, hence it is again a Riemannian symmetric space.

(iii) The isometry $\bar{\rho}$ is the unique extension to $M$ such that  $\bar{\rho}|_G=\rho$.

(iv) $M^{\bar{\rho}}=G^{\rho}=K$.
\end{thm}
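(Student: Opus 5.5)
The plan is to write $\bar{\rho}$ as the action of a single linear map on $V_1\oplus V_2$ that is unitary for the positive form $h\oplus h$; parts (i)--(iv) then follow quickly. Put $J=\mathrm{diag}(I,-I)$ on $V_1\oplus V_2$, so that $\mathbf{h}(u,v)=(h\oplus h)(u,Jv)$. Then
\[
L^{\perp_{\mathbf{h}}}=\{v:(h\oplus h)(u,Jv)=0 ~\forall u\in L\}=J^{-1}(L^{\perp})=J(L^\perp),
\]
so $\bar{\rho}=J\circ\perp$ and $\dim L^{\perp_{\mathbf{h}}}=n$. Since $J$ commutes with $\perp$, we get $\bar\rho^2=\mathrm{id}$ directly.

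\textbf{Step (i).} Both $J$ (which lies in the unitary group of $h\oplus h$) and the map $L\mapsto L^\perp$ preserve principal angles. For $\perp$ this uses the standard fact that the nonzero principal angles between $X,Y$ coincide with those between $X^\perp,Y^\perp$ when $\dim X=\dim Y=n$ in a $2n$-dimensional space. Since $d$ is determined by the principal angles, $\bar\rho$ is an isometry. Part (ii) then follows from the definition of a reflective submanifold: the fixed set of an involutive isometry is a totally geodesic submanifold, and the fixed point set of an isometric involution of a symmetric space is again symmetric.

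\textbf{Step (iii) and (iv).} For (iii), compute $\bar\rho(L_g)$ directly. A vector $(y,z)$ lies in $L_g^{\perp_{\mathbf{h}}}$ iff $h(x,y)-h(gx,z)=0$ for all $x$, i.e. $y=\bar g^t z$. Hence
\[
\bar\rho(L_g)=\{((\bar g^t) z,z)\}=\{(w,(\bar g^t)^{-1}w)\}=L_{\rho(g)},
\]
so $\bar\rho\circ\mathfrak{i}=\mathfrak{i}\circ\rho$. Uniqueness follows because $\mathfrak{i}(G)$ is open dense in $M$ and any two continuous maps (in particular isometries) agreeing on a dense set coincide.

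For (iv), $L$ is fixed by $\bar\rho$ iff $L\subset L^{\perp_{\mathbf{h}}}$ (equality by dimension), i.e. iff $\mathbf{h}|_L=0$. If $\mathbf{h}|_L=0$, then $L\cap V_1=0$, because $\mathbf{h}=h$ on $V_1$ and $h$ is definite; similarly $L\cap V_2=0$. So $L=L_g$ for some $g\in GL(V)$, and $\mathbf{h}|_{L_g}=0$ means $h(x,x')=h(gx,gx')$, i.e. $g\in K$. Conversely, $L_g$ is fixed exactly when $\rho(g)=g$. Thus $M^{\bar\rho}=\mathfrak{i}(K)=G^\rho$; it is the analogue for $\mathbf h$ of the \cite{HL} description of $O(n),U(n),Sp(n)$.

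\textbf{Expected obstacle.} The only nonroutine point is the invariance of principal angles under $L\mapsto L^\perp$. I would prove it via the CS decomposition: choose orthonormal bases of $X,Y$ adapted to the angles $\theta_i$, and observe that $X^\perp,Y^\perp$ then carry complementary orthonormal bases realizing the same $\theta_i$.
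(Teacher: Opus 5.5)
Your proposal is correct and follows essentially the same route as the paper. The paper also writes $\bar{\rho}(L)=I_{n,n}L^{\perp}$ (your $J\circ\perp$) and gets the isometry from $d(L_1^{\perp},L_2^{\perp})=d(L_1,L_2)$; it checks $\bar{\rho}(L_g)=L_{\rho(g)}$ with uniqueness by density, and it proves (iv) by noting that $\mathbf{h}|_L=0$ forces $L\cap V_1=L\cap V_2=\{0\}$ since $\mathbf{h}$ is definite on $V_1$ and on $V_2$. The one place you go beyond the paper is in justifying, via the CS decomposition, that principal angles are invariant under $L\mapsto L^{\perp}$, which the paper simply asserts.
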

\begin{proof}
(i) For any $L \in M$, since
$\mathbf{h}(L, I_{n,n}L^{\perp})=\bar{L}^tI_{n,n}I_{n,n}L^{\perp}=\bar{L}^tL^{\perp}=0$, we conclude that $\bar{\rho}(L)=I_{n,n}L^{\perp}$. For
any $L_1, L_2 \in M$,
\[
d(\bar{\rho}(L_1), \bar{\rho}(L_2))=d(I_{n,n}L_1^{\perp},I_{n,n}L_2^{\perp})=d(L_1^{\perp},L_2^{\perp})=d(L_1,L_2),
\]
which shows that  $\bar{\rho}$ is an involutive isometry on $M$.

(ii) This follows directly from (i). And according to \cite{L2}, reflective submanifolds of Riemannian symmetric spaces are again Riemannian symmetric spaces.

(iii) Recall the embedding $i: G\hookrightarrow M$  defined by $g \mapsto L_g$. From (i), we obtain $\bar{\rho}(L_g)=L_{(\bar{g}^t)^{-1}}=L_{\rho(g)}$, which shows that $\bar{\rho}$  restricts to $G$ and satisfies
$\bar{\rho}|_G=\rho$.
The uniqueness of $\bar{\rho}$ follows  since  $G$ is dense in $M$.

(iv) Since $G=\{L \in M: L \bigcap V_1=L \bigcap V_2=\{0\}\} \subset M$ and $G^{\rho}=K$, it suffices to show that for any $L \in M^{\bar{\rho}}$, we have $L \bigcap V_1=L \bigcap V_2=\{0\}$.
This follows from the fact that $\mathbf{h}(L,L)=0$ for any $L \in M^{\bar{\rho}}$, while $\mathbf{h}(V_1,V_1)>0$ and $\mathbf{h}(V_2,V_2)<0$.
\end{proof}

\begin{rmk}
Note that the Cartan involution on $G$ is not unique. Analogous results hold for other choices of Cartan involutions. In general, a Cartan involution can be written as
$\rho_p(g)=P^{-1}(\bar{g}^t)^{-1}P$, where $P$ is a positive definite matrix. This involution corresponds to the inner product  $h_p(u,v)=\bar{u}^tPv$ for any $u,v \in V$. And
$Aut(V,h_p)=G^{\rho_p}=\{g \in G: \bar{g}^tPg=P\}$ is a maximal compact subgroup of $G$.

Similarly,  the inner product $h_p \oplus h_p$ induces a geodesic distance $d_{p}$ on $M$, while $\mathbf{h_p}=h_p \oplus -h_p$ defines an involution on $M$ via $\bar{\rho}_p(L)=L^{\perp_{\mathbf{h_p} }}$.
One can then verify that $\bar{\rho}_p$ is an involutive isometry on $M$ with respect to $d_p$, $\bar{\rho}_p$ is an extension of the Cartan involution $\rho_p$ and $M^{\bar{\rho}_p}=G^{\rho_p}$.
For simplicity, the subsequent discussion will focus only on the standard Cartan involution.
\end{rmk}

\subsection{Other classical Lie groups}
Let's continue to consider $G=Aut(V,b)$ and $M=IG(V_1 \oplus V_2, b\oplus-b)$ case. Our aim is to describe the Cartan involution on
$G$ in the language of Grassmannians. Recall
that the standard Cartan involution on $G$ is given by $\rho: G \rightarrow G$, $g \mapsto (\bar{g}^t)^{-1}$.
A natural question arises: can the involution
$\bar{\rho}$ defined earlier on $Gr(n, V_1 \oplus V_2)$ be restricted to $M=IG(V_1 \oplus V_2, b\oplus-b)$?
The answer is affirmative.

On $V_1 \oplus V_2$, we define the bilinear form $\mathbf{b}=b\oplus-b$, which induces an involution on $Gr(n, V_1\oplus V_2)$:
\[
\bar{\sigma}: Gr(n, V_1\oplus V_2) \rightarrow Gr(n, V_1\oplus V_2), ~  \bar{\sigma}(L)=L^{\bot_{\mathbf{b}}}.
\]

\begin{lem}
$\bar{\sigma}$ is an involutive isometry on $Gr(n, V_1\oplus V_2)$.
Hence the fixed point set $(Gr(n, V_1\oplus V_2))^{\bar{\sigma}}=IG(V_1 \oplus V_2, b\oplus-b)$  is a reflective submanifold of  $Gr(n, V_1\oplus V_2)$.
\end{lem}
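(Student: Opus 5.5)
The idea is to mimic the proof of Theorem 3.1(i): write $\bar{\sigma}$ as ordinary orthogonal complementation (with respect to $h\oplus h$) followed by a fixed linear isometry of $V_1\oplus V_2$.

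For each of the seven types, the form $b$ has a Gram matrix $B$ relative to the standard inner product $h$. Concretely, $b(u,v)=\tilde u^t B v$, where $\tilde u$ is either $\bar u$ or $u$ depending on the type, and $B$ is one of $I_{p,q}$, $J_{m,m}$, $I$ or $iI$. Then $\mathbf{b}=b\oplus -b$ has Gram matrix $\mathbf{B}=\mathrm{diag}(B,-B)$.

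\textbf{Sesquilinear case.} Suppose $\tilde u=\bar u$. Then $w\in L^{\perp_{\mathbf b}}$ iff $\bar L^t\mathbf{B}w=0$, i.e. iff $\mathbf{B}w\in L^\perp$. Hence
\[
\bar{\sigma}(L)=\mathbf{B}^{-1}L^{\perp}.
\]

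\textbf{Bilinear case.} Suppose $\tilde u=u$. Then the condition is $\bar{\bar L}^t\mathbf{B}w=0$, which gives
\[
\bar\sigma(L)=\mathbf{B}^{-1}\,\overline{L}^{\perp},
\]
where $\overline{L}$ denotes entrywise conjugation. (For $\mathbb{H}$ with $b=\bar u^t i v$ this is the sesquilinear case with $B=iI$, acting by left multiplication; one should check that right-module conventions keep $\mathbf{B}^{-1}L^\perp$ a right $\mathbb{H}$-subspace.)

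In every case, $\mathbf{B}$ is unitary (orthogonal or symplectic, respectively), since $I_{p,q}$, $J_{m,m}$, $I$ and $iI$ all preserve $h$. Entrywise conjugation is a real-linear isometry of $h\oplus h$. Because $\mathbf{B}$ is a genuine $\mathbb{F}$-linear map in each case, these maps send $\mathbb{F}$-subspaces to $\mathbb{F}$-subspaces.

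The distance step then proceeds as in Theorem 3.1:
\begin{enumerate}
\item Principal angles are preserved by $h\oplus h$-isometries and by conjugation.
\item Principal angles between $L_1^\perp$ and $L_2^\perp$ coincide with those between $L_1$ and $L_2$. This holds because both complements have dimension $n$ in a $2n$-dimensional space, and the nonzero angles match; this fact was already used in Theorem 3.1.
\end{enumerate}
Together these give $d(\bar\sigma L_1,\bar\sigma L_2)=d(L_1,L_2)$.

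For the involution property, non-degeneracy of $\mathbf b$ gives $\dim L^{\perp_{\mathbf b}}=2n-n=n$ and $(L^{\perp_{\mathbf b}})^{\perp_{\mathbf b}}=L$. Here one uses that $\mathbf b$ is either symmetric, skew, Hermitian or skew-Hermitian, so left and right orthogonal complements agree. Hence $\bar\sigma$ maps $Gr(n,V_1\oplus V_2)$ to itself and $\bar\sigma^2=\mathrm{id}$.

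The fixed point set consists of those $L$ with $L=L^{\perp_{\mathbf b}}$. Such an $L$ satisfies $L\subset L^{\perp_{\mathbf b}}$, i.e. $\mathbf b|_L=0$. Conversely, if $\mathbf b|_L=0$, then $L\subset L^{\perp_{\mathbf b}}$, and equality follows since both have dimension $n$. So the fixed point set is exactly $IG(V_1\oplus V_2,b\oplus -b)$. It is a reflective submanifold by definition, and it is symmetric by \cite{L2}, as in Theorem 3.1(ii).

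The main obstacle is the case-by-case bookkeeping: in each of the seven types one must confirm that $\bar\sigma$ is really an $h\oplus h$-isometry composed with $\perp$. This is most delicate in the complex bilinear cases ($Sp(2m,\mathbb C)$, $O(n,\mathbb C)$), where conjugation enters, and in the quaternionic skew-Hermitian case $O^*(2n)$, where the side on which $i$ acts matters.
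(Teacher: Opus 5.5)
Your proposal is correct and follows essentially the same route as the paper. The paper likewise writes $\bar{\sigma}(L)=B^{-1}L^{\perp}$ in the sesquilinear cases and $\bar{\sigma}(L)=B^{-1}\bar{L}^{\perp}$ in the complex bilinear cases, uses $B^{-1}=\bar{B}^t=\pm B$, and concludes from invariance of principal angles as in Theorem 3.1(i); your explicit checks of $\bar{\sigma}^2=\mathrm{id}$ and of the fixed point set being exactly $IG(V_1\oplus V_2,b\oplus -b)$ fill in details the paper leaves implicit.
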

\begin{proof}
From the list of $G=Aut(V,b)$ in section 2.2,  $\mathbf{b}(u,v)=u^t B v$ or $\mathbf{b}(u,v)=\bar{u}^t B v$ for any $u,v \in V_1\oplus V_2$,  where $B$ is the matrix representation of $\mathbf{b}$ and $B^{-1}=\bar{B}^t=\pm B$.

If $\mathbf{b}(u,v)=u^t B v$,
$\bar{\sigma}(L)=B^{-1}\bar{L}^{\perp}$ since $\mathbf{b}(L,B^{-1}\bar{L}^{\perp})=L^t B B^{-1} \bar{L}^{\perp}=0$.

If $\mathbf{b}(u,v)=\bar{u}^t B v$,
$\bar{\sigma}(L)=B^{-1}L^{\perp}$ since $\mathbf{b}(L,B^{-1}L^{\perp})=\bar{L}^tBB^{-1}L^{\perp}=0$.

In both cases, for any $L_1, L_2 \in Gr(n, V_1\oplus V_2)$, we have $d(\bar{\sigma}(L_1), \bar{\sigma}(L_2))=d(L_1, L_2)$, where $d$ is the geodesic distance on $Gr(n, V_1\oplus V_2)$. Hence $\bar{\sigma}$ is an isometry.
\end{proof}

By direct computations, we also obtain:
\begin{lem}
The involutions $\bar{\rho}$ and $\bar{\sigma}$ commute: $\bar{\rho}\bar{\sigma}=\bar{\sigma}\bar{\rho}$.
\end{lem}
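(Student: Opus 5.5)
We plan to prove the commutation $\bar{\rho}\bar{\sigma}=\bar{\sigma}\bar{\rho}$ by writing both involutions as explicit matrix operations on subspaces and comparing them. Throughout, $L$ denotes an $n$-dimensional subspace of $V_1\oplus V_2$, identified with a $2n\times n$ matrix whose columns span it. For the induced maps on subspaces we use three facts: $(CL)^{\perp}=(\bar{C}^t)^{-1}L^{\perp}$ for invertible $C$; $\overline{L^{\perp}}=\bar{L}^{\perp}$; and the operations $L\mapsto \bar{L}$, $L\mapsto CL$, $L\mapsto L^{\perp}$ are well defined on the Grassmannian. In the quaternionic case we interpret conjugation and transpose in the quaternionic sense, where these identities remain valid.

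From the proof of Theorem 3.1(i) we have $\bar{\rho}(L)=I_{n,n}L^{\perp}$, with $I_{n,n}$ real, diagonal and satisfying $I_{n,n}^{-1}=I_{n,n}$. From the proof of Lemma 3.1, $\bar{\sigma}(L)=B^{-1}\bar{L}^{\perp}$ or $\bar{\sigma}(L)=B^{-1}L^{\perp}$, depending on whether $\mathbf{b}$ is bilinear or sesquilinear. Here $B=\mathrm{diag}(B_0,-B_0)$, where $B_0$ is the matrix of $b$ (one of $I_{p,q}$, $J_{m,m}$, $I$, $iI$, or the form for $O^*$). The key structural observation is that $B$ and $I_{n,n}=\mathrm{diag}(I,-I)$ commute, since both are block diagonal with respect to $V_1\oplus V_2$ and $I_{n,n}$ is scalar on each block. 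Moreover, $B$ is unitary, with $B^{-1}=\bar{B}^t=\pm B$.

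We then compute both composites in the sesquilinear case:
\[
\bar{\sigma}\bar{\rho}(L)=B^{-1}(I_{n,n}L^{\perp})^{\perp}=B^{-1}I_{n,n}L, \qquad
\bar{\rho}\bar{\sigma}(L)=I_{n,n}(B^{-1}L^{\perp})^{\perp}=I_{n,n}\bar{B}^tL .
\]
Since $\bar{B}^t=B^{-1}$ and $B$ commutes with $I_{n,n}$, the two composites agree. The bilinear case is handled the same way:
\[
\bar{\sigma}\bar{\rho}(L)=B^{-1}\overline{I_{n,n}L^{\perp}}^{\perp}=B^{-1}I_{n,n}\bar{L}, \qquad
\bar{\rho}\bar{\sigma}(L)=I_{n,n}(B^{-1}\bar{L}^{\perp})^{\perp}=I_{n,n}\bar{B}^t\bar{L},
\]
and these again agree for the same reason. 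Conceptually, both composites equal $L\mapsto I_{n,n}B^{-1}L$ (or its conjugate version), which is a linear isometry.

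The main obstacle is bookkeeping rather than substance. We must track the conventions for $\perp$ as either the orthogonal complement or the column-space complement of $\bar{L}$, and check the identity $(CL)^{\perp}=(\bar{C}^t)^{-1}L^{\perp}$ carefully. In the quaternionic cases ($Sp(p,q)$, $O^*(2n)$) there is the further issue that $i$ does not commute with the right scalar action. For $O^*(2n)$ we therefore treat $B_0$ as left multiplication by $i$ and verify that it commutes with $I_{n,n}$; this holds because $I_{n,n}$ is real. Once these conventions are fixed, the argument reduces in every case to the commutation $BI_{n,n}=I_{n,n}B$ together with the unitarity of $B$.
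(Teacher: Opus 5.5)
Your proof is correct and carries out exactly the ``direct computation'' the paper invokes without details. Starting from $\bar\rho(L)=I_{n,n}L^{\perp}$ (proof of Theorem 3.1(i)) and $\bar\sigma(L)=B^{-1}\bar L^{\perp}$, resp.\ $B^{-1}L^{\perp}$ (proof of Lemma 3.1), you show that both composites reduce to $L\mapsto I_{n,n}B^{-1}\bar L$, resp.\ $L\mapsto I_{n,n}B^{-1}L$, because $B=\mathrm{diag}(B_0,-B_0)$ is unitary and commutes with $I_{n,n}$; this is what the paper leaves implicit.
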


As a corollary, we establish the following theorem:
\begin{thm}
(i) $\bar{\rho}$ restricts to an involution on $M=IG(V_1 \oplus V_2, b\oplus-b)$.

(ii) $\bar{\rho}$ is an involutive isometry on $M=IG(V_1 \oplus V_2, b\oplus-b)$.

(iii) The isometry $\bar{\rho}$ is the unique extension to $M$ such that  $\bar{\rho}|_G=\rho$.

(iv)  $M^{\bar{\rho}}=G^{\rho}=K$ is a maximal compact subgroup of $G$.
\end{thm}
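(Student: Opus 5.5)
The plan is to deduce everything from the two preceding lemmas: $\bar{\sigma}$ is an involutive isometry of $Gr(n,V_1\oplus V_2)$ with fixed point set $M$, and $\bar{\rho}\bar{\sigma}=\bar{\sigma}\bar{\rho}$. Theorem 3.1 already gives that $\bar{\rho}$ is an involutive isometry of $Gr(n,V_1\oplus V_2)$.

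For (i), take $L\in M$. Then $\bar{\sigma}(L)=L$, so $\bar{\sigma}(\bar{\rho}(L))=\bar{\rho}(\bar{\sigma}(L))=\bar{\rho}(L)$, which gives $\bar{\rho}(L)\in M$. Since $\bar{\rho}^2=\mathrm{id}$, the restriction is an involution of $M$.

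For (ii), $M$ is a reflective submanifold of $Gr(n,V_1\oplus V_2)$, being the fixed set of an isometric involution. Hence it is totally geodesic, and its intrinsic geodesic distance $d_{\overline{G}}$ is the restriction of $d$, at least locally. Any isometry of the ambient space preserving $M$ preserves the induced Riemannian metric, so $\bar{\rho}|_M$ is an isometry of $(M,d_{\overline{G}})$. The cleanest way to say this is that $d\bar{\rho}$ preserves the ambient metric on $T_LM\subset T_L Gr$.

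For (iii), use the embedding $\mathfrak{i}(g)=L_g$. In the proof of Theorem 3.1(iii) we already computed $\bar{\rho}(L_g)=L_{(\bar{g}^t)^{-1}}$. For $g\in G$ the paper notes that $\bar{g}^t\in G$, hence $(\bar{g}^t)^{-1}\in G$ and this equals $L_{\rho(g)}$ inside $M$. So $\bar{\rho}|_G=\rho$. Uniqueness follows because $\mathfrak{i}(G)$ is dense in $M$ and two continuous maps agreeing on a dense set coincide.

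For (iv), note that $M^{\bar{\rho}}=\{L\in M:\mathbf{h}|_L=0\}\subset \{L\in Gr: \mathbf{h}|_L=0\}$. The argument of Theorem 3.1(iv) applies verbatim. If $\mathbf{h}|_L=0$, then $L\cap V_1=L\cap V_2=\{0\}$, since $\mathbf{h}$ is positive definite on $V_1$ and negative definite on $V_2$. So $L\in M_{V_1V_2}\cong G$ by Theorem 2.2, and then $L=L_g$ with $\rho(g)=g$. Thus $M^{\bar{\rho}}=G^{\rho}$. Finally, $G^{\rho}=\{g\in G:\bar{g}^tg=I\}=G\cap Aut(V,h)=K$, the standard maximal compact subgroup. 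This is a known fact for the standard Cartan involution of these classical groups.

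The main subtlety is (ii): identifying the isometry with respect to $d_{\overline{G}}$. This requires $d_{\overline{G}}$ to be the metric induced from the Grassmannian, which is presumably how the paper defines it. Everything else reduces to the commutation lemma.
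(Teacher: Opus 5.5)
Your proof is correct. Parts (i), (iii) and (iv) follow the paper. Part (i) is the same commutation argument with $\bar{\sigma}$. For (iii)--(iv) the paper only says they follow from the definitions and earlier results, and you make that explicit by reusing $\bar{\rho}(L_g)=L_{(\bar{g}^t)^{-1}}$, the density of $\mathfrak{i}(G)$, the definiteness of $\mathbf{h}$ on $V_1$ and $V_2$, and Theorem 2.2.

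The genuine difference is in (ii). The paper cites \cite{QT}: reflective submanifolds of symmetric R-spaces are geodesically convex. From this it concludes that the intrinsic distance on $IG(V_1\oplus V_2,b\oplus-b)$ coincides globally with the restriction of the Grassmannian distance $d$, so (ii) is literally a restriction of Theorem 3.1(i). You instead argue that an ambient isometry mapping $M$ onto itself preserves the induced Riemannian metric. Hence it preserves lengths of curves lying in $M$, and therefore the intrinsic distance.

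Your route is elementary and self-contained. It needs neither the R-space convexity theorem nor the fact that $M$ is totally geodesic. Your hedge ``at least locally'' is also unnecessary, since preserving lengths of curves already gives the global statement. Smoothness of $\bar{\rho}$ is clear from $\bar{\rho}(L)=I_{n,n}L^{\perp}$.

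The paper's route gives a stronger by-product: $d_{\overline{G}}$ is globally computed by the principal-angle formula of the ambient Grassmannian. Both arguments take $d_{\overline{G}}$ to be the metric induced from $Gr(n,V_1\oplus V_2)$, as you correctly flag.
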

\begin{proof}
(i) For any $L \in M$, $\bar{\sigma}(L)=L$, then $\bar{\sigma}(\bar{\rho}(L))=\bar{\rho}(\bar{\sigma}(L))=\bar{\rho}(L)$,
i.e. $\bar{\rho}(L) \in M$. Hence $\bar{\rho}$  restricts to $M=IG(V_1 \oplus V_2, b\oplus-b)$.

(ii) By \cite{QT}, reflective submanifolds of symmetric R-spaces are (geodesically) convex. Since $Gr(n, V_1\oplus V_2)$ is a symmetric R-space and $M=IG(V_1 \oplus V_2, b\oplus-b)$ is a reflective submanifold of it, we
have $d_{IG(V_1 \oplus V_2, b\oplus-b)}=d_{Gr(n, V_1\oplus V_2)}$. Therefore,  $\bar{\rho}$ remains an isometry on $M=IG(V_1 \oplus V_2, b\oplus-b)$.

(iii)--(iv) These follow directly from the definitions and earlier results.
\end{proof}

In conclusion, for any classical noncompact Lie group $G$, we present a Grassmannian description of the standard Cartan involution on $G$, i.e. for the double graph-like Grassmannian
$G= \{L \in M: L \bigcap V_1=L \bigcap V_2=\{0\}\}$, its standard Cartan involution is given by $\rho(L)=L^{\perp_{\mathbf{h}}}$ for any $L \in G$.

\bigskip
\section{{\large Grassmannian perspectives of classical noncompact Riemannian symmetric spaces}}
According to Cartan's classification of Riemannian symmetric spaces, there is a one-to-one correspondence between noncompact Riemannian symmetric spaces and noncompact reductive Lie groups. Specifically, for each noncompact Lie group $G$, one can associate a unique noncompact Riemannian symmetric space $G/K$, where $K=G^{\rho}\subset G$ is the fixed point set of a Cartan involution $\rho$ and constitutes a maximal compact subgroup of $G$.
Indeed, for a noncompact Lie group $G$,  consider the involution $\eta: G \rightarrow G$ defined by  $\eta(g)=\rho(g)^{-1}$. Then the identity connected component $(G^{\eta})_0$ of the fixed point set $G^{\eta}$ is isomorphic to the associated noncompact Riemannian symmetric space
$G/K$ \cite{Loos}.

In this section,  let $G$ be a classical noncompact Lie group and $\overline{G}=M $ be its Grassmannian compactification.
We will first provide a description of $\eta$ on $G$ in terms of Grassmannian geometry by extending it to $\overline{G} $.
Subsequently, we characterize classical noncompact Riemannian symmetric spaces as space-like Grassmannians.

\subsection{Grassmannian perspectives of $\eta$}
In the previous section, we provided a Grassmannian description of the standard Cartan involution $\rho: G \rightarrow G$, $g \mapsto (\bar{g}^t)^{-1}$
by extending it to an involuiton $\bar{\rho}: M \rightarrow M, L\mapsto L^{\perp_{\mathbf{h}}}$,  here $\mathbf{h}(u,v)=\bar{u}^tI_{n,n}v$, $\forall u,v \in V_1\oplus V_2$ is a Hermitian form On $V_1\oplus V_2$.
Now we consider the involution  $\eta: G \rightarrow G$, $g \mapsto \bar{g}^t$. Note that $\eta$ is an involution, but it is not a group automorphism anymore. Can we give a Grassmannian description of this $\eta$?

On $V_1\oplus V_2$, we introduce a skew-Hermitian form $\mathbf{w}(u,v)=\bar{u}^tJ_{n,n}v$, $\forall u,v \in V_1\oplus V_2$.
Since $\mathbf{w}$ is non-degenerate, it induces an involution  on $Gr(n, V_1\oplus V_2)$:
\[
\bar{\eta}: Gr(n, V_1\oplus V_2) \rightarrow Gr(n, V_1\oplus V_2), ~  \bar{\eta}(L)= L^{\bot_\mathbf{w}}.
\]
Analogous to the case of  $\bar{\rho}$, we establish the following result:

\begin{thm}
(i) $\bar{\eta}$ restricts to an involution on $IG(V_1 \oplus V_2, b\oplus-b)$.

(ii) $\bar{\eta}$ is an isometry on $M=Gr(n, V_1\oplus V_2)$ or $IG(V_1 \oplus V_2, b\oplus-b)$.
The fixed point set of  $M$ under  $\bar{\eta}$, i.e. $M^{\bar{\eta}}=\{L \in M: \mathbf{w}|_L=0\}$, is a reflective submanifold of  $M$, hence it is again a Riemannian symmetric space.

(iii) The isometry $\bar{\eta}$ is the unique extension to $M$ such that  $\bar{\eta}|_G=\eta$.
\end{thm}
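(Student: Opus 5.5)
The plan is to follow the proof of the theorem on $\bar{\rho}$ in Section 3 step by step, with $\mathbf{h}$ replaced by the skew-Hermitian form $\mathbf{w}$.

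\emph{Explicit matrix form of $\bar{\eta}$.} Write $L$ for a $2n\times n$ matrix whose columns span the subspace. Since $\mathbf{w}(L,J_{n,n}^{-1}L^{\perp})=\bar{L}^tJ_{n,n}J_{n,n}^{-1}L^{\perp}=0$, we get
\[
\bar{\eta}(L)=J_{n,n}^{-1}L^{\perp}=-J_{n,n}L^{\perp}.
\]
This is $L^{\perp}$ followed by an element of $O(2n)$, $U(2n)$ or $Sp(2n)$, since $J_{n,n}$ is unitary for $h\oplus h$.

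\emph{Part (ii) on $Gr(n,V_1\oplus V_2)$.} Both $L\mapsto L^{\perp}$ and $L\mapsto J_{n,n}L$ preserve principal angles. Hence $d(\bar{\eta}(L_1),\bar{\eta}(L_2))=d(L_1,L_2)$, exactly as for $\bar{\rho}$. Since $\mathbf{w}$ is non-degenerate, $(L^{\perp_{\mathbf{w}}})^{\perp_{\mathbf{w}}}=L$, so $\bar{\eta}$ is an involution. Its fixed points are the $n$-dimensional $L$ with $L\subset L^{\perp_{\mathbf{w}}}$, i.e.\ $\mathbf{w}|_L=0$. This set is reflective, and by \cite{L2} it is a Riemannian symmetric space.

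\emph{Part (i).} As in the lemma that $\bar{\rho}\bar{\sigma}=\bar{\sigma}\bar{\rho}$, I will show by direct computation that $\bar{\eta}\bar{\sigma}=\bar{\sigma}\bar{\eta}$. From the proof that $\bar{\sigma}$ is an isometry, $\bar{\sigma}(L)=B^{-1}\bar{L}^{\perp}$ or $B^{-1}L^{\perp}$, where $B=\mathrm{diag}(B_0,-B_0)$ and $B_0$ is the matrix of $b$. The two maps commute because $B$ and $J_{n,n}$ commute up to sign: $J_{n,n}B=\begin{pmatrix}0&-B_0\\-B_0&0\end{pmatrix}=-BJ_{n,n}$. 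One also needs $\overline{J_{n,n}}=J_{n,n}$ and that $\perp$ commutes with unitary maps and with conjugation. Then for $L\in M$ we have $\bar{\sigma}(\bar{\eta}(L))=\bar{\eta}(\bar{\sigma}(L))=\bar{\eta}(L)$, so $\bar{\eta}$ preserves $M=IG(V_1\oplus V_2,b\oplus-b)$.

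\emph{Part (ii) on $M$.} By \cite{QT}, $M$ is a reflective, hence convex, submanifold of the symmetric R-space $Gr(n,V_1\oplus V_2)$. Therefore $d_M$ is the restriction of $d_{Gr}$, so $\bar{\eta}|_M$ is an isometry. Its fixed set is $\{L\in M:\mathbf{w}|_L=0\}$, which is reflective in $M$ and again symmetric.

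\emph{Part (iii).} For $L_g=\{(x,gx)\}$, a vector $(y,z)$ lies in $L_g^{\perp_{\mathbf{w}}}$ exactly when
\[
\overline{(x,gx)}^{\,t}J_{n,n}(y,z)=\bar{x}^t z-\bar{x}^t\bar{g}^t y=0\quad\text{for all }x,
\]
that is, $z=\bar{g}^t y$. Hence $\bar{\eta}(L_g)=L_{\bar{g}^t}=L_{\eta(g)}$, and $\bar{g}^t\in G$ as noted in Section 2.2. Uniqueness follows because $i(G)$ is dense in $M$ and any two continuous extensions agreeing on a dense set coincide.

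\emph{Main obstacle.} The sign bookkeeping in the commutation $\bar{\eta}\bar{\sigma}=\bar{\sigma}\bar{\eta}$ across all seven types of $b$ is the main risk. This is especially delicate in the quaternionic case ($O^*(2n)$), where scalars act on the right and conjugation of $B$ must be handled carefully. I would verify this case by case, using $B^{-1}=\bar{B}^t=\pm B$. If that becomes awkward, a fallback is to avoid commutation altogether. Since $\bar{\eta}(L_g)=L_{\eta(g)}$ and $\eta(g)\in G$, the map $\bar{\eta}$ sends the dense subset $i(G)$ of $M$ into $M$. Because $M$ is closed and $\bar{\eta}$ is continuous, $\bar{\eta}(M)\subset M$, which gives (i) directly.
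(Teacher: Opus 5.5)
Your proposal is correct and is essentially the argument the paper intends; the paper gives no separate proof and only says the result is ``analogous to the case of $\bar{\rho}$''. The shared steps are the formula $\bar{\eta}(L)=J_{n,n}^{-1}L^{\perp}$, invariance of principal angles, commutation with $\bar{\sigma}$ (your check $J_{n,n}B=-BJ_{n,n}$ is exactly what is needed), convexity of the reflective submanifold $IG(V_1\oplus V_2,b\oplus-b)$ for the isometry claim, and density of $\mathfrak{i}(G)$ for uniqueness. Your explicit computation $\bar{\eta}(L_g)=L_{\bar{g}^t}$ and your density-plus-closedness fallback for (i) supply details the paper leaves implicit; the isometry claim on $IG$ also does not need convexity, since an ambient isometry preserving a submanifold is automatically an isometry of the induced metric.
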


From this theorem, we obtain a Grassmannian interpretation of $\eta$. Specifically, on the double graph-like Grassmannian $G= \{L \in M: L \bigcap V_1=L \bigcap V_2=\{0\}\}$, the involution $\eta$ is given by $\eta(L)=L^{\bot_{\mathbf{w}}}$ for any $L \in G$.

\bigskip

Next we will compute the fixed point set of $\eta$ and  $\bar{\eta}$. Denote $H_n(\mathbb{F})$ to be the $n\times n$ Hermitian matrices and $H_n^{+}(\mathbb{F})$ to be the $n\times n$ positive-definite matrices.
Recall that $G^{\rho}=K$ is the maximal compact subgroup of $G$, by Loos \cite{Loos}:

\begin{prop}
$G^{\eta}=G \cap H_n(\mathbb{F})$ is a symmetric space in ``Loos" sense. The identity connected component of  $G^{\eta}$ is $(G^{\eta})_0=H_n^{+}(\mathbb{F}) \cap G=\{g\rho(g)^{-1}: g \in G\}=\{g\bar{g}^t: g \in G\} \cong G/K$.
\end{prop}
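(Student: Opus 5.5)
We need to prove three things: $G^{\eta}=G\cap H_n(\mathbb{F})$, that $(G^{\eta})_0=H_n^+(\mathbb{F})\cap G=\{g\bar g^t: g\in G\}$, and that this set is $\cong G/K$. The plan is to reduce everything to the polar (Cartan) decomposition of $G$, using only that $G$ is closed under $g\mapsto \bar g^t$ (noted in Section 2.2).

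First, since $\eta(g)=\rho(g)^{-1}=\bar g^t$, we have $g\in G^\eta$ iff $g=\bar g^t$, which gives $G^\eta=G\cap H_n(\mathbb{F})$ immediately. The Loos symmetric space structure is $x\cdot y=x y^{-1}x$. I will check that $G^\eta$ is closed under this product, which holds because $\overline{(xy^{-1}x)}^t=x y^{-1}x$ for Hermitian $x,y$. The Loos axioms then follow formally from the corresponding identities in the group.

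Next, set $P=\{g\bar g^t:g\in G\}$. Clearly $P\subset G\cap H_n^+(\mathbb{F})$, and $P$ is connected as the continuous image of $G$ whenever $G$ is connected. For the reverse inclusion, take $p\in G\cap H_n^+$ and write $p=\exp(X)$ with $X$ Hermitian. The key step is to show $X\in\mathfrak g$. This uses the standard fact that the real algebraic group $G$ is stable under $\bar{\cdot}^t$: the condition $p^k\in G$ for all $k\in\mathbb{Z}$ reads $\bar p^{\,k\,t}Bp^k=B$. Diagonalizing $p$ by a unitary matrix, this becomes an exponential-polynomial identity in $k$ that holds on $\mathbb{Z}$, hence for all real $k$. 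So $p^{t}\in G$ for all $t\in\mathbb{R}$, and in particular $p^{1/2}\in G$, which gives $p=p^{1/2}\overline{(p^{1/2})}^t\in P$.

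This same argument shows that $G\cap H_n^+$ is connected: it equals $\exp(\mathfrak p)$, where $\mathfrak p=\mathfrak g\cap H_n(\mathbb{F})$, and the path $t\mapsto p^t$ joins $I$ to $p$. It is also open and closed in $G^\eta$. To see this, note that a Hermitian element of $G$ that lies in the limit of positive definite ones has nonnegative eigenvalues, and being invertible, it is positive definite. A continuity argument on eigenvalues along paths inside the set of invertible Hermitian matrices likewise shows that the identity component is contained in $H_n^+$. This is the step I expect to require the most care, especially for $\mathbb{F}=\mathbb{H}$, where I will use the complex $2n\times 2n$ representation so that eigenvalues make sense. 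Hence $(G^\eta)_0=G\cap H_n^+(\mathbb{F})=P$.

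Finally, the map $G\to P$, $g\mapsto g\bar g^t=g\rho(g)^{-1}$, is surjective, and its fibers are exactly the cosets $gK$, since $g\bar g^t=h\bar h^t$ iff $h^{-1}g\in K=G^\rho$. So it descends to a bijection $G/K\to P$. This bijection is $G$-equivariant for the action $g\cdot p=gp\bar g^t$, and it is a diffeomorphism by the polar decomposition $G=\exp(\mathfrak p)K$. Because $\mathfrak{p}$ is Hermitian, the diffeomorphism $\mathfrak p\times K\to G$ is the restriction of the usual homeomorphism $H_n\times U\to GL$. This gives $(G^\eta)_0\cong G/K$.
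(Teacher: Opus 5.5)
Your argument is correct, but it does not follow the paper's route. The paper gives no argument of its own and simply invokes Loos's general theory. There, for any Lie group $G$ with an involutive automorphism $\rho$, the set $\{g:\rho(g)=g^{-1}\}$ is a symmetric space under $x\cdot y=xy^{-1}x$, and the quadratic map $g\mapsto g\rho(g)^{-1}$ identifies $G/G^{\rho}$ with its image there. That citation is coordinate-free and yields $\{g\rho(g)^{-1}\}\cong G/K$ directly as symmetric spaces. It does not, however, explain the concrete description $H_n^{+}(\mathbb{F})\cap G$, nor why the image is the whole identity component when $G$ is disconnected (e.g.\ $GL(n,\mathbb{R})$, $O(p,q)$, $O(n,\mathbb{C})$).

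Your proof supplies exactly this. The real-algebraic lemma that a positive definite $p\in G$ has $p^{s}\in G$ for all real $s$ gives three things at once: $G\cap H_n^{+}(\mathbb{F})=\{g\bar g^{t}: g\in G\}$, the polar decomposition $G=\exp(\mathfrak{p})K$ (so $K$ meets every component of $G$), and path-connectedness of $G\cap H_n^{+}(\mathbb{F})$. Your open-and-closed argument then identifies this set with $(G^{\eta})_0$ with no connectedness hypothesis on $G$. The price is that the argument is tied to matrix groups stable under $g\mapsto\bar g^{t}$, which is all the paper needs.

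Two small additions. First, for the bilinear (non-Hermitian) forms of $Sp(2m,\mathbb{R})$, $Sp(2m,\mathbb{C})$ and $O(n,\mathbb{C})$, the defining condition on $p^k$ reads $\overline{p^k}\,B\,p^k=B$ rather than the one you wrote. The same unitary diagonalization still works, with $C=U^{t}BU$ in place of $U^{*}BU$. Second, if ``$\cong$'' is meant in the Loos sense rather than as a $G$-equivariant diffeomorphism, add the one-line check $\phi(gK)\phi(hK)^{-1}\phi(gK)=\phi\bigl(g\rho(g^{-1}h)K\bigr)$ for $\phi(gK)=g\bar g^{t}$.
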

According to Loos \cite{Loos}, a manifold $M$ is called a symmetric space if it is endowed with the multiplication $\mu: M \times M \rightarrow M$, $(x,y)\mapsto x \cdot y$
satisfying the following axioms (1) $x \cdot x=x$, (2) $x \cdot (x \cdot y)=y$, (3) $x \cdot (y \cdot z)=(x \cdot y) \cdot (x \cdot z)$. In particular, both Riemannian symmetric spaces and Lie groups are examples of  “Loos” symmetric spaces. Furthermore, semisimple symmetric spaces, which generalize Riemannian symmetric spaces, also fall into this category.

Recall that
a semisimple symmetric space can be defined as a homogeneous space $G/H$,  where $G$ is a semisimple Lie group and $H$ is an open subgroup of the fixed point group of an involutive automorphism of $G$. Such spaces were classified by M. Berger based on Cartan's classification of Riemannian symmetric spaces \cite{B}.
Moreover, a connected semisimple symmetric space $G/H$ is Riemannian if and only if $H$ is compact \cite{R}.

\begin{rmk}
Other connected components of $G^{\eta}$ can be described in a similar manner.
For instance, when $G=GL(n,\mathbb{R})$, the connected components of $G^{\eta}$ are given by
\[
\{gI_{p,q}g^t: g \in GL(n,\mathbb{R})\}=GL(n,\mathbb{R})/O(p,q),
\]
each of which constitutes a semisimple symmetric space.
For more general cases, the corresponding connected components are also semisimple symmetric spaces,
which can be determined from the classification of semisimple symmetric spaces provided in \cite{B}\cite{Huang}.
\end{rmk}

\begin{prop}
$M^{\bar{\eta}}$ and $M^{\bar{\rho}}$ form a complementary  pair of reflective submanifolds in $M$. Consequently,  there exists a compact Lie group $G_c$ such that $M^{\bar{\eta}} \cong G_c/K$.
\end{prop}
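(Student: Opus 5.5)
The plan is to work at one base point lying in both fixed point sets. I take the identity element of $G$, i.e. $o=L_I=X=\{(x,x):x\in V\}$. It lies in $M^{\bar\rho}$ because $I\in K$. It also lies in $M^{\bar\eta}$: with $J_{n,n}$ one has $\mathbf{w}((x,x),(y,y))=\bar x^t y-\bar x^t y=0$, so $X$ is $\mathbf{w}$-isotropic and $\bar\eta(X)=X$. By Lemma 2.3 and its proof, the graph-like Grassmannian $M_X=\{L'_A=\{(x+Ax,x-Ax)\}:A\in\mathfrak g\}$ is a chart around $o$ with $T_oM\cong\mathfrak g$. For the proposition it therefore suffices to compute the differentials $d\bar\rho_o$ and $d\bar\eta_o$ on $\mathfrak g$ and to check that their fixed spaces are orthogonal complements.

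\textbf{Step 1: the two involutions in the chart.} For $A$ near $0$, $L'_A$ is the graph $L_g$ of the Cayley transform $g=(I-A)(I+A)^{-1}\in G$. By Theorem 3.2(iii) and Theorem 4.1(iii), on $G$ we have $\bar\rho(L_g)=L_{(\bar g^t)^{-1}}$ and $\bar\eta(L_g)=L_{\bar g^t}$. A short computation, using that $(I\pm\bar A^t)$ commute, gives
\[
\bar g^{\,t}=(I+\bar A^t)^{-1}(I-\bar A^t)
\]
and
\[
(\bar g^{\,t})^{-1}=(I-\bar A^t)(I+\bar A^t)^{-1}.
\]
The second expression is the Cayley transform of $-\bar A^t$, and the first is the Cayley transform of $\bar A^t$. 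Hence, in the chart, $\bar\rho(L'_A)=L'_{-\bar A^t}$ and $\bar\eta(L'_A)=L'_{\bar A^t}$. Both maps are linear in $A$, so $d\bar\rho_o(A)=-\bar A^t$ and $d\bar\eta_o(A)=\bar A^t$.

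\textbf{Step 2: the tangent spaces.} The tangent space of a fixed point set of an isometric involution at a fixed point is the $+1$ eigenspace of the differential. This gives
\[
T_oM^{\bar\rho}=\mathfrak k=\{A\in\mathfrak g:\bar A^t=-A\},
\]
\[
T_oM^{\bar\eta}=\mathfrak p=\{A\in\mathfrak g:\bar A^t=A\}.
\]
Because $\mathfrak g$ is closed under $A\mapsto\bar A^t$, we get $\mathfrak g=\mathfrak k\oplus\mathfrak p$. The metric at $o$ induced by $h\oplus h$ is, up to a positive constant, $\mathrm{Re}\,\mathrm{tr}(\bar A^tB)$. Under this inner product $\mathfrak k\perp\mathfrak p$, since $\mathrm{Re}\,\mathrm{tr}(\bar A^tB)=-\mathrm{Re}\,\mathrm{tr}(\bar B^tA)=-\mathrm{Re}\,\mathrm{tr}(\bar A^tB)$ for $A\in\mathfrak k$, $B\in\mathfrak p$. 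Together with Theorem 3.2 and Theorem 4.1(ii), which show that both sets are reflective, this is exactly the definition of a complementary pair at $o$. As a by-product, $\bar\rho\bar\eta$ acts as $-\mathrm{id}$ on $T_oM$, so $\bar\rho\bar\eta=s_o$ is the geodesic symmetry at $o$.

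\textbf{Step 3: identifying $M^{\bar\eta}$ as $G_c/K$.} This is the step I expect to be the main obstacle. The approach is to apply Leung's theory \cite{L2}: for a complementary pair through $o$, each member is a totally geodesic compact symmetric space whose tangent space is a Lie triple system. Here $[\mathfrak p,\mathfrak p]\subset\mathfrak k$, and $K$ acts on $M^{\bar\eta}$ because $K$ commutes with $\bar\eta$ (it preserves both $h$ and $J$). I would then take $G_c$ to be the compact group with Lie algebra $\mathfrak k\oplus i\mathfrak p$, realized inside the isometry group of $M$ as the group generated by $K$ and the transvections along geodesics tangent to $\mathfrak p$. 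The delicate points are twofold. First, one must show that the isotropy of $o$ in $G_c$ is exactly $K$ rather than only an open subgroup of it. Second, one must handle possible disconnectedness of $M^{\bar\eta}$, by working with the component through $o$ or by appealing to Leung's case-by-case classification list. I would first try the uniform argument, using that the stabilizer of $o$ in the isometry group of $M$ commuting with $\bar\eta$ meets $G_c$ in $M^{\bar\rho}=K$ (Theorem 3.2(iv)), and fall back on the classification tables otherwise.
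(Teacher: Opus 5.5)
Your proposal is correct and is essentially the paper's proof. It uses the same base point $o=\{(x,x):x\in V\}$ and the same identification $T_oM^{\bar\rho}=\mathfrak k$, $T_oM^{\bar\eta}=\mathfrak m$ in the chart $L'_A$ of Lemma 2.3, with Frobenius orthogonality giving the complementary pair, and the same appeal to Leung's classification for $G_c/K$. Your explicit differentials $d\bar\rho_o(A)=-\bar A^t$ and $d\bar\eta_o(A)=\bar A^t$ fill in what the paper leaves as ``arguments similar to Lemma 2.3''.

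Your caveat that only the component of $M^{\bar\eta}$ through $o$ can be identified with $G_c/K$ is a genuine point the paper ignores. For example, for $G=O(p,q)$ the point $L_{I_{p,q}}$ is isolated in $M^{\bar\eta}$. Indeed, $\mathfrak{i}(G^{\eta})=M^{\bar\eta}\cap\mathfrak{i}(G)$ is open in $M^{\bar\eta}$. A symmetric $g\in O(p,q)$ satisfies $(gI_{p,q})^2=I$, so if $g$ is near $I_{p,q}$ then $gI_{p,q}$ is an involution near $I$, which forces $g=I_{p,q}$.

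There is one slip in Step 1: the inverse is $(\bar g^{\,t})^{-1}=(I+\bar A^t)(I-\bar A^t)^{-1}$, whereas the expression you display equals $\bar g^{\,t}$ itself. Your conclusion $\bar\rho(L'_A)=L'_{-\bar A^t}$ is still correct, and it follows directly from $\mathbf h((x+Ax,x-Ax),(y+By,y-By))=2\bar x^t(\bar A^t+B)y$.
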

According to \cite{L1}\cite{L2}, reflective submanifolds of a Riemannian symmetric space come in pairs. Explicitly, two reflective submanifolds $N_1$, $N_2$ through the origin point $o$ of $M$
is a complementary  pair if and only if $T_oN_1$ is the orthogonal complement of $T_oN_2$ in $T_oM$.
\begin{proof}
Both $M^{\bar{\rho}}$ and $M^{\bar{\eta}}$ are reflective submanifolds through the origin point $o=\{(x,x): x \in V\}$.
Let $\mathfrak{g}=\mathfrak{k}+\mathfrak{m}$ be the Cartan decomposition corresponding to the Cartan involution $\rho$. By arguments similar to
Lemma 2.3, we have
\[
T_{o}M \cong \{L'_A=\{(x+Ax,x-Ax): x \in V\}:  A \in \mathfrak{g} \} .
\]
\[
T_{o}M^{\bar{\rho}}  \cong \{L'_A=\{(x+Ax,x-Ax): x \in V\}:  A \in \mathfrak{k} \}.
\]
\[
T_{o}M^{\bar{\eta}} \cong \{L'_A=\{(x+Ax,x-Ax): x \in V\}:  A \in \mathfrak{m} \}.
\]
By identifying the tangent spaces at $o$ as graphs from  $o$ to $o^\perp$, we have $T_{o}M=\mathfrak{g}$, $T_{o}M^{\bar{\rho}}=\mathfrak{k}$, $T_{o}M^{\bar{\eta}}=\mathfrak{m}$.
Since $\mathfrak{k}$ is composed of skew-Hermitian elements in $\mathfrak{g}$ and $\mathfrak{m}$ is composed of Hermitian elements in $\mathfrak{g}$, $\mathfrak{k}$ is orthogonal to
$\mathfrak{m}$ with respect to the Frobenius inner product of matrices.
Hence $T_oM^{\bar{\eta}}$ is the orthogonal complement of $T_oM^{\bar{\rho}}$ in $T_oM$ and
 ($M^{\bar{\rho}}$, $M^{\bar{\eta}}$) is a complementary  pair.

Since $M^{\bar{\rho}}=K$, by the classification of complementary pairs of Riemannian symmetric spaces in \cite{L2}, we have $M^{\bar{\eta}} \cong G_c/K$
for a compact Lie group $G_c$.
\end{proof}

\begin{table}[htbp]
\centering
\caption{Fixed point set $G^{\rho}$, $M^{\bar{\rho}}$, $(G^{\eta})_0$ and $M^{\bar{\eta}}$}
\renewcommand\arraystretch{1.6} 
\begin{tabular}{|c|c|c|c|}
\hline
序号 & $G$ & $G^{\rho}= M^{\bar{\rho}}$ & $(G^{\eta})_0 \subset M^{\bar{\eta}}$ \\
\hline
1 & $\mathrm{GL}(n,\mathbb{R})$ & $\mathrm{O}(n)$ & $\mathrm{GL}(n,\mathbb{R})/\mathrm{O}(n)\subset \mathrm{U}(n)/\mathrm{O}(n)$ \\
\hline
2 & $\mathrm{GL}(n,\mathbb{C})$ & $\mathrm{U}(n)$ & $\mathrm{GL}(n,\mathbb{C})/\mathrm{U}(n)\subset \mathrm{U}(n)$ \\
\hline
3 & $\mathrm{GL}(n,\mathbb{H})$ & $\mathrm{Sp}(n)$ & $\mathrm{GL}(n,\mathbb{H})/\mathrm{Sp}(n)\subset \mathrm{U}(2n)/\mathrm{Sp}(n)$ \\
\hline
4 & $\mathrm{O}(p, q)$ & $\mathrm{O}(p)\mathrm{O}(q)$ & $\mathrm{O}(p, q)/\mathrm{O}(p)\mathrm{O}(q)\subset \mathrm{O}(p+q)/\mathrm{O}(p)\mathrm{O}(q)$ \\
\hline
5 & $\mathrm{U}(p, q)$ & $\mathrm{U}(p)\mathrm{U}(q)$ & $\mathrm{U}(p, q)/\mathrm{U}(p)\mathrm{U}(q)\subset \mathrm{U}(p+q)/\mathrm{U}(p)\mathrm{O}(q)$ \\
\hline
6 & $\mathrm{Sp}(p, q)$ & $\mathrm{Sp}(p)\mathrm{Sp}(q)$ & $\mathrm{Sp}(p, q)/\mathrm{Sp}(p)\mathrm{Sp}(q)\subset \mathrm{Sp}(p+q)/\mathrm{Sp}(p)\mathrm{Sp}(q)$ \\
\hline
7 & $\mathrm{Sp}(2m, \mathbb{R})$ & $\mathrm{U}(m)$ & $\mathrm{Sp}(2m, \mathbb{R})/\mathrm{U}(m)\subset \mathrm{Sp}(m)/\mathrm{U}(m)$ \\
\hline
8 & $\mathrm{Sp}(2m, \mathbb{C})$ & $\mathrm{Sp}(m)$ & $\mathrm{Sp}(2m, \mathbb{C})/\mathrm{Sp}(m)\subset \mathrm{Sp}(m)$ \\
\hline
9 & $\mathrm{O}(n,\mathbb{C})$ & $\mathrm{O}(n)$ & $\mathrm{O}(n,\mathbb{C})/\mathrm{O}(n)\subset \mathrm{O}(n)$ \\
\hline
10 & $\mathrm{O}^*(2m)$ & $\mathrm{U}(m)$ & $\mathrm{O}^*(2m)/\mathrm{U}(m)\subset \mathrm{O}(2m)/\mathrm{U}(m)$ \\
\hline
\end{tabular}
\end{table}

\subsection{Classical noncompact Riemannian symmetric spaces as space-like Grassmannians}
According to the general theory of Riemannian symmetric spaces, a duality exists between symmetric spaces of compact type and those of noncompact type.
As shown in Table 2 from the previous subsection, $(G^{\eta})_0 \cong G/K$ and $M^{\bar{\eta}} \cong G_c/K$ form a dual pair of Riemannian symmetric spaces.

Since classical noncompact Riemannian symmetric spaces are in one-to-one correspondence with classical noncompact Lie groups, the Table 2 above includes all ten types of classical noncompact Riemannian symmetric spaces.
Moreover, since $(G^{\eta})_0 \subset G^{\eta} \subset  M^{\bar{\eta}}$, we obtain the following proposition:

\begin{prop}
For any classical noncompact Riemannian symmetric space $G/K$ and its compact dual $G_c/K$, there exists a natural embedding of $G/K$ into $G_c/K$.
\end{prop}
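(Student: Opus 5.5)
The embedding will be the composite
\[
G/K \;\xrightarrow{\ \sim\ }\; (G^{\eta})_0 \;\subset\; G^{\eta} \;\xrightarrow{\ \mathfrak{i}\ }\; M^{\bar{\eta}} \;\cong\; G_c/K ,
\]
where the middle map is $g \mapsto L_g=\{(x,gx): x\in V\}$.

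First I fix the correspondence. By Proposition 4.1 (Loos), $(G^{\eta})_0=H_n^{+}(\mathbb{F})\cap G=\{g\bar{g}^t : g\in G\}$. The orbit map $gK \mapsto g\bar g^t = g\rho(g)^{-1}$ is a diffeomorphism $G/K \to (G^{\eta})_0$. It is well defined because $k\bar k^t=I$ for $k\in K$. It is injective because $g\bar g^t=g'\bar g'^t$ forces $g^{-1}g'\in K$. Surjectivity is part of Loos' statement.

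Next I check that $\mathfrak{i}(G^{\eta})$ lies in $M^{\bar\eta}$. By Theorem 4.1(iii), $\bar{\eta}|_G=\eta$ under the identification $G\cong M_{V_1V_2}$, that is, $\bar\eta(L_g)=L_{\eta(g)}=L_{\bar g^t}$. So for $g\in G^{\eta}$ we get $\bar\eta(L_g)=L_g$, hence $L_g\in M^{\bar\eta}$. I can also verify this directly: the condition $\mathbf{w}|_{L_g}=0$ reads $\mathbf{w}((x,gx),(y,gy)) = \bar{x}^t g y - \overline{(gx)}^t y = \bar x^t(g-\bar g^t)y$. This vanishes for all $x,y$ exactly when $g=\bar g^t$. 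So in fact $\mathfrak{i}(G)\cap M^{\bar\eta}=\mathfrak{i}(G^\eta)$.

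Then I show the composite is an embedding. By Theorem 2.2, $\mathfrak{i}$ is injective and is a diffeomorphism onto the open subset $M_{V_1V_2}\subset M$, since it is a graph chart. Its restriction to the closed submanifold $G^\eta$ of $G$ is therefore an embedding into $M$ with image $M^{\bar\eta}\cap M_{V_1V_2}$. This image is open in $M^{\bar\eta}$. Hence $(G^\eta)_0$ embeds as an open subset of $M^{\bar\eta}$, in fact into the component containing $o=L_I$. Finally, Proposition 4.2 identifies $M^{\bar\eta}\cong G_c/K$, and Table 2 lists the resulting inclusions case by case.

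The main obstacle is the word \emph{natural}. The argument needs the identification $M^{\bar\eta}\cong G_c/K$ from Proposition 4.2 to send the base point $o=L_I$ to the coset $eK$, and to be compatible with the $K$-actions. I would handle this by noting that $K$ acts on $M$ by $L\mapsto (k,k)L$. This action preserves $\mathbf{h}$, $\mathbf{w}$ and $\mathbf{b}$, fixes $o$, and on $(G^\eta)_0$ becomes the action $p\mapsto kpk^{-1}=kp\bar k^t$, which matches left translation on $G/K$. The resulting embedding is then $K$-equivariant and base-point preserving, and that is the sense in which it is natural.
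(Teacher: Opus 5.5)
Your proposal is correct and follows the paper's own route. The paper composes $G/K\cong (G^{\eta})_0$ (Proposition 4.1) with the inclusion $G^{\eta}\subset M^{\bar\eta}$, justified by $\mathbf{w}|_{L_g}=0$ for Hermitian $g$, and then with $M^{\bar\eta}\cong G_c/K$ (Proposition 4.2); your additional checks (that the image is open in the component of $M^{\bar\eta}$ through $o$, and that the map is $K$-equivariant and sends $eK$ to $o$) go beyond what the paper spells out but are consistent with it.
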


Note that in the case of Hermitian symmetric spaces, the above mapping corresponds to the well-known Borel embedding theorem.

\begin{rmk}
By Remark 4.1, the other connected components of $G^{\eta}$  are, not necessary Riemannian, semisimple symmetric spaces. Therefore we also have a natural embedding from these
 semisimple symmetric spaces into the Grassmannian $M^{\bar{\eta}} \cong G_c/K$.
\end{rmk}

Next, we will describe the classical noncompact Riemannian symmetric spaces as ``space-like" Grassmannians. Recall that the embedding of $G$ intio $M$ is given by
\[
\mathfrak{i}: G \hookrightarrow M, ~  \mathfrak{i}(g)= L_g:=\{(x, gx): x \in V\}.
\]
Then by Proposition 4.1, we obtain
\[
G/K \cong (G^{\eta})_0 =\{L_g \in M: g \in G \cap H_n^{+}(\mathbb{F})\}.
\]
And by Proposition 4.2, we obtain
\[
G_c/K \cong M^{\bar{\eta}} =\{L \in M: \mathbf{w}|_L=0\}.
\]
Note that $g \in  H_n(\mathbb{F})$  implies $\mathbf{w}|_{L_g}=0$, hence $G/K \hookrightarrow G_c/K $.

\bigskip

For any $g \in H_n^{+}(\mathbb{F})$, the Cayley transform yields $C(g)=(I-g)(I+g)^{-1} \in H_n^{<1}(\mathbb{F})$, where $H_n^{<1}(\mathbb{F})$ denotes the set of
Hermitian matrices $A$ satisfying $I-\bar{A}^tA>0$. Motivated by this, we aim to define a ``Cayley transform" on the Grassmannnian $Gr(n,  V_1\oplus V_2)$.

To facilitate clarity, we represent a subspace $L \in Gr(n,  V_1\oplus V_2)$ via its matrix class. Explicitly, given a basis $\{\alpha_1, \alpha_2, \cdots, \alpha_n\}$ of $L$, we write
\[
L=[\alpha_1, \alpha_2, \cdots, \alpha_n]=\left[
\begin{array}{cc}
A \\
B \end{array} \right]
\]
with $A,B \in M_{n \times n}(\mathbb{F})$ and $rank(L)=n$. Here $\left[
\begin{array}{cc}
A_1 \\
B_1 \end{array} \right]=\left[
\begin{array}{cc}
A_2 \\
B_2 \end{array} \right]$ if and only if $\left(
\begin{array}{cc}
A_1 \\
B_1 \end{array} \right)=\left(
\begin{array}{cc}
A_2 \\
B_2 \end{array} \right)g$ for some $g \in GL(n, \mathbb{F})$.

We now define the ``Cayley transform" on  $Gr(n,  V_1\oplus V_2)$ as the map
\[
\mathcal{C}: Gr(n, V_1\oplus V_2) \rightarrow Gr(n, V_1\oplus V_2)
\]
\[
\mathcal{C}(\left[
\begin{array}{cc}
A \\
B \end{array} \right])=\left(
\begin{array}{cc}
I_n  &  I_n\\
I_n  & -I_n\end{array} \right)\left[
\begin{array}{cc}
A \\
B \end{array} \right]=\left[
\begin{array}{cc}
A+B \\
A-B \end{array} \right].
\]
Note that $ \mathcal{C}$ is invertible and $\mathcal{C}^2=Id$. This map is termed the Cayley transform because, for any
$g \in H_n^{+}(\mathbb{F})$, we have
\[
\mathcal{C}(L_g)=\left(
\begin{array}{cc}
I_n  &  I_n\\
I_n  & -I_n\end{array} \right)\left[
\begin{array}{cc}
I \\
g \end{array} \right]=\left[
\begin{array}{cc}
I+g \\
I-g \end{array} \right]=\left[
\begin{array}{cc}
I \\
(I-g)(I+g)^{-1} \end{array} \right]=L_{C(g)}
\]
where $C(g)=(I-g)(I+g)^{-1} \in H_n^{<1}(\mathbb{F})$  is the classical Cayley transform of the matrix $g$.

\begin{lem}
For any $L \in Gr(n,  V_1\oplus V_2)$, the following hold:

(i) $\mathbf{w}|_L=0 \Leftrightarrow \mathbf{w}|_{\mathcal{C}(L)}=0$.

(ii) $L=L_g  ~ with ~ g \in H_n^{+}(\mathbb{F})  \Leftrightarrow \mathbf{w}|_{\mathcal{C}(L)}=0, \mathbf{h}|_{\mathcal{C}(L)}>0$.
\end{lem}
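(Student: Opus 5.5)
The plan is to reduce both statements to two matrix identities for the Cayley matrix
\[
P=\left(\begin{array}{cc} I_n & I_n\\ I_n & -I_n\end{array}\right),
\]
which is real and symmetric, so that $\bar{P}^t=P$. Write $\mathbf{w}(u,v)=\bar{u}^tJ_{n,n}v$ and $\mathbf{h}(u,v)=\bar{u}^tI_{n,n}v$. Then $\mathcal{C}(L)=PL$, and for $u,v\in L$ we have $\mathbf{w}(Pu,Pv)=\bar{u}^t(PJ_{n,n}P)v$ and $\mathbf{h}(Pu,Pv)=\bar{u}^t(PI_{n,n}P)v$. A direct block multiplication gives
\[
PJ_{n,n}P=-2J_{n,n},\qquad PI_{n,n}P=2\left(\begin{array}{cc} 0 & I_n\\ I_n & 0\end{array}\right).
\]
Over $\mathbb{H}$ this only uses that $P$ has real entries, so it commutes with scalars.

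For (i), the first identity says $\mathbf{w}(\mathcal{C}u,\mathcal{C}v)=-2\,\mathbf{w}(u,v)$. Hence $\mathbf{w}|_L=0$ if and only if $\mathbf{w}|_{\mathcal{C}(L)}=0$.

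For (ii), represent $L=\left[\begin{array}{c}A\\ B\end{array}\right]$. The restrictions are then given by $n\times n$ Gram matrices:
\[
\mathbf{w}|_L \leftrightarrow \bar{A}^tB-\bar{B}^tA,\qquad \mathbf{h}|_{\mathcal{C}(L)} \leftrightarrow 2(\bar{A}^tB+\bar{B}^tA).
\]
By (i), the condition $\mathbf{w}|_{\mathcal{C}(L)}=0$ is equivalent to $\bar{A}^tB=\bar{B}^tA$, i.e.\ $S:=\bar{A}^tB$ is Hermitian. Given this, $\mathbf{h}|_{\mathcal{C}(L)}>0$ is equivalent to $S>0$. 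So the right-hand side of (ii) is equivalent to $\bar{A}^tB$ being Hermitian positive definite.

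For ($\Rightarrow$), take $L=L_g$, i.e.\ $A=I$ and $B=g$ with $g\in H_n^{+}(\mathbb{F})$. Then $S=g$ is Hermitian positive definite, as required.

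For ($\Leftarrow$), the main point is to show that $A$ is invertible, so that $L$ is a graph over $V_1$. If $Ax=0$ for some $x\neq 0$, then $\bar{x}^tSx=\overline{(Ax)}^tBx=0$, contradicting $S>0$. So $A$ is invertible, and we may normalize the basis: $L=\left[\begin{array}{c}I\\ BA^{-1}\end{array}\right]=L_g$ with $g=BA^{-1}$. Then
\[
g=(\bar{A}^t)^{-1}\,S\,A^{-1},
\]
which is a congruence of $S$. Hence $g$ is Hermitian positive definite, so $g\in H_n^{+}(\mathbb{F})$.

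The only real obstacle is this invertibility step, which lets us pass from an arbitrary matrix class $\left[\begin{array}{c}A\\ B\end{array}\right]$ to a graph $L_g$. Everything else is a routine block computation, with the usual care over $\mathbb{H}$ about writing scalars on the correct side.
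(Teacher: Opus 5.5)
Your proof is correct. For (i) it matches the paper's argument: the paper expands the Gram matrix of $\mathbf{w}$ on $\mathcal{C}(L)$ in terms of $A$ and $B$, and your identity $PJ_{n,n}P=-2J_{n,n}$ is the same computation written compactly.

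For (ii) your route genuinely differs from the paper's. The paper treats only $L=L_g$. It uses $\mathcal{C}(L_g)=L_{C(g)}$, where $C(g)=(I-g)(I+g)^{-1}$ is the classical Cayley transform. It then reads $\mathbf{w}|_{L_{C}}=0$ as ``$C$ is Hermitian'' and $\mathbf{h}|_{L_{C}}>0$ as $I-\bar{C}^tC>0$. Finally it quotes the classical equivalence $g\in H_n^{+}(\mathbb{F})\Leftrightarrow C(g)\in H_n^{<1}(\mathbb{F})$.

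That route explains the name ``Cayley transform'' and ties the Grassmannian map to the matrix one. However, it presupposes that $L$ is already a graph over $V_1$, and that $I+g$ is invertible. So the implication ($\Leftarrow$) for an arbitrary $L\in Gr(n,V_1\oplus V_2)$ is not actually argued there. It could be completed: $\mathbf{h}|_{\mathcal{C}(L)}>0$ forces $\mathcal{C}(L)\cap V_2=\{0\}$, and $I+C$ is invertible for a strict contraction $C$. But the paper does not do this.

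You instead pull $\mathbf{h}$ back through $P$, so the right-hand side becomes positivity of $S=\bar{A}^tB$ on $L$ itself. The invertibility of $A$ (i.e.\ $L\cap V_2=\{0\}$) then drops out of $S>0$, and $g=BA^{-1}$ is congruent to $S$. Your argument is self-contained and uniform over $\mathbb{R},\mathbb{C},\mathbb{H}$, and it handles general $L$ directly. What you give up is only the explicit link to the classical matrix Cayley transform.
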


\begin{proof}
(i) For any $L=\left[
\begin{array}{cc}
A \\
B \end{array} \right] \in Gr(n,  V_1\oplus V_2)$, we have
\[
\mathbf{w}|_L=0 \Leftrightarrow \bar{A}^tB=\bar{B}^tA \Leftrightarrow \mathbf{w}|_{\mathcal{C}(L)}=0
\]

(ii)For any $L=L_g=\left[
\begin{array}{cc}
I \\
g \end{array} \right] \in Gr(n,  V_1\oplus V_2)$, we have $\mathcal{C}(L_g)=L_{C(g)}$ and
\[
g \in H_n(\mathbb{F}) \Leftrightarrow \mathbf{w}|_{\mathcal{C}(L)}=0
\]
\[
g \in H_n^{+}(\mathbb{F})  \Leftrightarrow C(g) \in H_n^{<1}(\mathbb{F}) \Leftrightarrow \mathbf{w}|_{\mathcal{C}(L)}=0, \mathbf{h}|_{\mathcal{C}(L)}>0
\]
The proof is completed.
\end{proof}

Therefore, for the general linear group $G=Aut(V)$ and its compactification $M=Gr(n,  V_1\oplus V_2)$, we have the identifications:
\[
G_c/K \cong  \{L \in M: \mathbf{w}|_L=0\},
\]
\[
G/K \cong  \{L \in M: \mathbf{w}|_L=0, \mathbf{h}|_L>0\}.
\]

A natural question arises: what is the corresponding picture for  $G=Aut(V,b)$ and  $M=IG(V_1 \oplus V_2, b\oplus-b)$?

Note that in the $G=Aut(V,b)$ case, the Cayley transform $ \mathcal{C}$ does not preserve $M=IG(V_1 \oplus V_2, b\oplus-b)$, i.e. $\mathcal{C}(M)\neq M$.
Nevertheless, via the isomorphism
\[
\mathcal{C}: M \rightarrow \mathcal{C}(M)
\]
we obtain the realizations:
\[
G_c/K \cong  \{L \in \mathcal{C}(M): \mathbf{w}|_L=0\},
\]
\[
G/K \cong  \{L \in \mathcal{C}(M): \mathbf{w}|_L=0, \mathbf{h}|_L>0\}.
\]

In summary, we establish the following:

\begin{thm}
For any classical noncompact Riemannian symmetric space $G/K$ and its compact dual $G_c/K$, the space $G/K$ can be realized as the ``space-like" part of the Grassmannian $G_c/K$.
Explicitly, $G/K$ consists of points in $G_c/K$ which is space-like with respect to $\mathbf{h}$, i.e. $G/K  \cong \{L \in G_c/K: \mathbf{h}|_L>0\}$.
\end{thm}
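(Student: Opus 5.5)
The plan is to transport everything through the Cayley transform $\mathcal{C}$ and reduce to Lemma 4.1. First I will fix the model of $G_c/K$. By Proposition 4.2, $G_c/K\cong M^{\bar{\eta}}=\{L\in M:\mathbf{w}|_L=0\}$. The map $\mathcal{C}$ is induced by a linear automorphism of $V_1\oplus V_2$, so it is a diffeomorphism of $Gr(n,V_1\oplus V_2)$ with $\mathcal{C}^2=Id$. By Lemma 4.1(i) it preserves the condition $\mathbf{w}|_L=0$. Hence $\mathcal{C}$ maps $M^{\bar{\eta}}$ bijectively onto $\{L\in\mathcal{C}(M):\mathbf{w}|_L=0\}$, and we take this set as the realization of $G_c/K$ in the statement. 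In the case $G=GL(n,\mathbb{F})$ we have $\mathcal{C}(M)=M$, so nothing changes.

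Next I will identify $G/K$ inside $M^{\bar{\eta}}$. By Proposition 4.1, $G/K\cong(G^{\eta})_0=\{L_g: g\in G\cap H_n^{+}(\mathbb{F})\}$. Applying $\mathcal{C}$ and Lemma 4.1(ii), a subspace $L\in Gr(n,V_1\oplus V_2)$ is of the form $L_g$ with $g\in H_n^{+}(\mathbb{F})$ exactly when $\mathbf{w}|_{\mathcal{C}(L)}=0$ and $\mathbf{h}|_{\mathcal{C}(L)}>0$. It remains to intersect with $M$, i.e. to check that
\[
\{L'\in\mathcal{C}(M):\mathbf{w}|_{L'}=0,\ \mathbf{h}|_{L'}>0\}=\mathcal{C}\bigl(\{L_g: g\in G\cap H_n^{+}(\mathbb{F})\}\bigr).
\]
The inclusion "$\supseteq$" is immediate from Lemma 4.1(ii). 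For "$\subseteq$", take $L'=\mathcal{C}(L)$ with $L\in M$. Lemma 4.1(ii) gives $L=L_g$ for some $g\in H_n^{+}(\mathbb{F})$. Since $L_g\in M$ and $L_g\cap V_1=L_g\cap V_2=\{0\}$ (because $g$ is invertible), Theorem 2.2 (the identification $M_{V_1V_2}\cong G$) gives $g\in G$.

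Finally I will compose the two identifications. The inclusion $(G^{\eta})_0\subset M^{\bar{\eta}}$ from Proposition 4.3 corresponds under $\mathcal{C}$ to the inclusion of the $\mathbf{h}$-space-like locus into $\{L\in\mathcal{C}(M):\mathbf{w}|_L=0\}$. This yields $G/K\cong\{L\in G_c/K:\mathbf{h}|_L>0\}$.

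The step I expect to be the main obstacle is the equivalence in Lemma 4.1(ii): showing that $\mathbf{h}|_{\mathcal{C}(L)}>0$ forces $L$ to be a graph $L_g$ with $g$ positive definite. I would argue this directly in matrix form. Write $\mathcal{C}(L)=\left[\begin{array}{c}P\\ Q\end{array}\right]$. Positivity of $\bar{P}^tP-\bar{Q}^tQ$ forces $P$ to be invertible, so $\mathcal{C}(L)=L_A$ with $A=QP^{-1}$ and $I-\bar{A}^tA>0$. The condition $\mathbf{w}=0$ makes $A$ Hermitian, so its eigenvalues lie in $(-1,1)$. Then $I+A$ is invertible, and $g=C(A)=(I-A)(I+A)^{-1}$ is Hermitian positive definite with $L=\mathcal{C}(L_A)=L_g$. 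A secondary point is to make sure the "space-like" condition is read on $\mathcal{C}(L)$, i.e. in the Cayley-transformed model of $G_c/K$; the statement should be interpreted in that model.
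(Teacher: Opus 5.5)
Your proposal is correct and follows the paper's route. Like the paper, you realize $G_c/K$ as $\mathcal{C}(M^{\bar\eta})$ and identify $G/K\cong (G^\eta)_0$ with its $\mathbf{h}$-space-like part using Lemma 4.1 and Propositions 4.1--4.2. Two of your steps are more careful than the paper, which checks Lemma 4.1(ii) only for $L$ already of the form $L_g$: your matrix argument that $\mathbf{w}|_{\mathcal{C}(L)}=0$ and $\mathbf{h}|_{\mathcal{C}(L)}>0$ force $L=L_g$ with $g$ positive definite, and your check that $L_g\in M$ forces $g\in G$.
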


\begin{rmk}
A similar result was obtained in \cite{C}\cite{CHL} using a different approach.
\end{rmk}

\bigskip
\section{{\large Appendix}}
In this appendix, we provide a description of the group structure on $G=Aut(V,b)$ via the geometry of the double graph-like Grassmannian.

To begin, we first conside $G=Aut(V)$, $M=Gr(n,V_1\oplus V_2)$ as before. Recall that for any $X, Y \in M$ with $X \cap Y=\{0\}$, we define ``double graph-like Grassmannian"  as
\[
M_{XY} \triangleq \{L \in M: L \bigcap X^\bot=L \bigcap Y^\bot=\{0\}\}.
\]
Denote by $\pi_X: V_1\oplus V_2 \rightarrow X$, $\pi_Y: V_1\oplus V_2 \rightarrow Y$  the canonical projections to $X$ and $Y$ respectively. Since
\[
\pi_{X}|_{L} ~ is ~ an ~ isomorphism \Leftrightarrow L \bigcap X^{\bot}=\{0\},
\]
\[
\pi_{Y}|_{L} ~ is ~ an ~ isomorphism \Leftrightarrow L \bigcap Y^{\bot}=\{0\},
\]
for any $L \in M_{XY}$, we have the following isomorphism:
\[
\varphi_{L} :  ~  ~  X  ~  ~  \xrightarrow[\sim]{(\pi_X|_{L})^{-1}}  ~  ~  L   ~  ~   \xrightarrow[\sim]{\pi_Y|_{L}}   ~   ~ Y
\]

\begin{lem}
For any $L_0, L_1, L_2 \in M_{XY}$, there exists a unique $L' \in M_{XY}$ such that
$\varphi_{L_2}\circ \varphi_{L_0}^{-1}\circ \varphi_{L_1}=\varphi_{L'}$.
\end{lem}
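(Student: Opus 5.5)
The plan is to show that the map $L \mapsto \varphi_L$ is a bijection from $M_{XY}$ onto the set $\mathrm{Iso}(X,Y)$ of all $\mathbb{F}$-linear isomorphisms $X \to Y$. Once this is established, the lemma is immediate. Indeed, $\varphi_{L_2}\circ \varphi_{L_0}^{-1}\circ \varphi_{L_1}$ is a composition $X \to Y \to X \to Y$ of isomorphisms, hence it lies in $\mathrm{Iso}(X,Y)$, and bijectivity gives a unique $L'$ with $\varphi_{L'}$ equal to it.

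Injectivity is straightforward. By definition, $L$ is the image of $X$ under $(\pi_X|_L)^{-1}$, so $L=\{x+\psi(x): x\in X\}$ with $\psi=\pi_{X^\perp}\circ(\pi_X|_L)^{-1}\in Hom(X,X^\perp)$. Thus $L$ is the graph of $\psi$, and $\varphi_L=\pi_Y\circ(\mathrm{id}+\psi)$. Because $Y\cap X^\perp$ need not be zero, I will not argue through $\psi$ directly. Instead, I reconstruct $L$ from $\varphi=\varphi_L$ as
\[
L=\{v\in V_1\oplus V_2:\ \pi_Y(v)=\varphi(\pi_X(v))\}.
\]
The inclusion $\subseteq$ holds by definition. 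Equality follows by comparing dimensions, provided the right-hand side, call it $L_\varphi$, has dimension $n$.

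Surjectivity and the dimension count are the main obstacle. For an arbitrary $\varphi\in \mathrm{Iso}(X,Y)$, I consider the linear map
\[
T_\varphi=\pi_Y-\varphi\circ\pi_X: V_1\oplus V_2\to Y,
\]
so that $L_\varphi=\ker T_\varphi$. To show $T_\varphi$ is onto, I use the hypothesis $X\cap Y=\{0\}$. Since $\dim X=\dim Y=n$ in the $2n$-dimensional space, this gives $V_1\oplus V_2=X\oplus Y$. Taking orthogonal complements gives $X^\perp\cap Y^\perp=\{0\}$, hence also $V_1\oplus V_2 = X^\perp\oplus Y^\perp$. On $Y^\perp$ we have $\pi_Y=0$, so $T_\varphi=-\varphi\circ\pi_X$ there. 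Because $Y^\perp\cap X^\perp=\{0\}$, the restriction $\pi_X|_{Y^\perp}$ is injective, hence an isomorphism onto $X$. Therefore $T_\varphi|_{Y^\perp}$ is already an isomorphism onto $Y$, so $T_\varphi$ is surjective and $\dim L_\varphi=n$.

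It remains to check that $L_\varphi\in M_{XY}$ and that $\varphi_{L_\varphi}=\varphi$. If $v\in L_\varphi\cap X^\perp$, then $\pi_X(v)=0$, so $\pi_Y(v)=0$, and $v\in X^\perp\cap Y^\perp=\{0\}$. If $v\in L_\varphi\cap Y^\perp$, then $\varphi(\pi_X(v))=0$, so $\pi_X(v)=0$ because $\varphi$ is invertible, and again $v=0$. By the defining relation of $L_\varphi$, $\pi_Y=\varphi\circ\pi_X$ on $L_\varphi$, which gives $\varphi_{L_\varphi}=\varphi$.

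This completes the bijection $M_{XY}\cong \mathrm{Iso}(X,Y)$, and with it the lemma. Uniqueness of $L'$ is the injectivity statement, and existence is $L'=L_{\varphi_{L_2}\varphi_{L_0}^{-1}\varphi_{L_1}}$. As a remark, fixing $L_0$ this operation defines the group law on $M_{XY}$, isomorphic to $GL(n,\mathbb{F})$ via $L\mapsto \varphi_{L_0}^{-1}\varphi_L$, which is what yields Theorem 2.1.
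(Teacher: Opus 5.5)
Your proof is correct, but it takes a different route from the paper's.

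The paper argues in coordinates. It uses the transitive $O(2n)$-action to normalize $X=\left[\begin{array}{c} I\\ O\end{array}\right]$, then writes $Y=\left[\begin{array}{c} A\\ I\end{array}\right]$ and $L=\left[\begin{array}{c} I\\ B\end{array}\right]$, so that $L\in M_{XY}$ exactly when $A^t+B$ is invertible. It computes the projection matrix $P_Y$, observes that $\varphi_L$ is governed by $(I+A^tA)^{-1}(A^t+B)$, and reads off $B'=(A^t+B_2)(A^t+B_0)^{-1}(A^t+B_1)-A^t$. Uniqueness is only asserted.

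You instead prove a stronger, basis-free statement: $L\mapsto\varphi_L$ is a bijection $M_{XY}\to\mathrm{Iso}(X,Y)$, with inverse $\varphi\mapsto\ker(\pi_Y-\varphi\circ\pi_X)$. The only real input is $X^\perp\cap Y^\perp=\{0\}$. In the paper's coordinates this is the fact that $Y^\perp=\left[\begin{array}{c} I\\ -A^t\end{array}\right]$ is again a graph over $X$.

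What your route buys:
\begin{itemize}
\item It treats $\mathbb{R},\mathbb{C},\mathbb{H}$ uniformly rather than by analogy, and it needs no normalization.
\item It actually proves uniqueness. The paper leaves implicit that $B\mapsto(I+A^tA)^{-1}(A^t+B)$ is injective.
\item It gives the surjectivity used in the subsequent corollary for free. The paper has to exhibit $B=(A^t+B_0)g-A^t$ separately.
\item It makes clear that the ternary operation $(L_0,L_1,L_2)\mapsto L'$ is just the natural torsor structure of $\mathrm{Iso}(X,Y)$.
\end{itemize}

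What the paper's computation buys is an explicit formula for the product in terms of the matrices $B_i$. For $X=V_1$, $Y=V_2$ (so $A=0$) it reduces to $B_2B_0^{-1}B_1$, which makes the identification with matrix multiplication in $GL(n,\mathbb{F})$ immediate.
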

\begin{proof}
We will prove for $\mathbb{F}=\mathbb{R}$, the proof is similar for $\mathbb{F}=\mathbb{C},\mathbb{H}$. Note that this argument only depends on the
relative position of $X$ and $Y$. Since the action of $O(2n)$  on $M=Gr(n,V_1\oplus V_2)$
 is transitive and preserves the relative position of two subspaces, we can assume $X=\left[
\begin{array}{cc}
I \\
O \end{array} \right]$.

From $Y\bigcap X=\{0\}$, we have $Y=\left[
\begin{array}{cc}
A \\
I \end{array} \right]$ with $A$ an $n \times n$ matrix, otherwise we will have $Y=\left[
\begin{array}{cc}
* & *\\
* & 0_{n \times 1}\end{array} \right]$ after elementary column transformation, which is contradict to $Y\bigcap X=\{0\}$.

\bigskip

For any  $L \in M_{XY}$, $L \bigcap X^{\bot}=\{0\}$. Since $X^{\bot}=\left[
\begin{array}{cc}
O \\
I \end{array} \right]$, we have  $L=\left[
\begin{array}{cc}
I \\
B \end{array} \right]$ with $B$ an $n \times n$ matrix. Moreover,  since $Y^{\bot}=\left[
\begin{array}{cc}
I \\
-A^t \end{array} \right]$, we have $L \bigcap Y^{\bot}=\{0\}$ is equivalent to $A^t+B$ is invertible.

\bigskip

Under the above conditions, we have the map $\varphi_{L}$ is given by
\begin{align*}
\varphi_{L} : & ~  ~   ~ X  ~  ~   ~ \longrightarrow  ~  ~   ~ L  ~  ~   ~ ~  ~   ~\longrightarrow ~  ~   ~ ~  ~   ~ Y\\
 ~ ~ & \left(
\begin{array}{cc}
x \\
0 \end{array} \right) \mapsto  \left(
\begin{array}{cc}
x \\
Bx \end{array} \right) \mapsto  \left(
\begin{array}{cc}
A(I+A^tA)^{-1}(A^t+B)x \\
(I+A^tA)^{-1}(A^t+B)x \end{array} \right)
\end{align*}
where the first map from $X$ to $L$ is obvious and the second map from $L$ to $Y$ is given by the projection matrix $P_Y$ of $Y$. By direct computation, we have
\[
P_Y=Y(Y^tY)^{-1}Y^t=\left(
\begin{array}{cc}
A(I+A^tA)^{-1}A^t & A(I+A^tA)^{-1}\\
(I+A^tA)^{-1}A^t & (I+A^tA)^{-1}\end{array} \right).
\]

Now for any  $L_0=\left[
\begin{array}{cc}
I \\
B_0 \end{array} \right], L_1=\left[
\begin{array}{cc}
I \\
B_1 \end{array} \right], L_2=\left[
\begin{array}{cc}
I \\
B_2 \end{array} \right] \in M_{XY}$, we have $A^t+B_i$ is invertible for $i=0,1,2$ and $\varphi_{L_2}\circ \varphi_{L_0}^{-1}\circ \varphi_{L_1}$ is as follows:

\[
X  ~  ~   ~ \longrightarrow  ~  ~   ~ Y  ~  ~   ~ ~  ~   ~\longrightarrow ~  ~   ~ ~  ~   ~ X~  ~   ~ \longrightarrow  ~  ~   ~ Y
\]
\[
\left(
\begin{array}{cc}
x \\
0 \end{array} \right) \mapsto   \left(
\begin{array}{cc}
A(I+A^tA)^{-1}(A^t+B_1)x \\
(I+A^tA)^{-1}(A^t+B_1)x \end{array} \right)
\mapsto   \left(
\begin{array}{cc}
(A^t+B_0)^{-1}(A^t+B_1)x \\
0 \end{array} \right)
\]
\[
 \mapsto   \left(
\begin{array}{cc}
A(I+A^tA)^{-1}(A^t+B_2)(A^t+B_0)^{-1}(A^t+B_1)x \\
(I+A^tA)^{-1}(A^t+B_2)(A^t+B_0)^{-1}(A^t+B_1)x \end{array} \right)
\]
To get $\varphi_{L_2}\circ \varphi_{L_0}^{-1}\circ \varphi_{L_1}=\varphi_{L'}$, we just need to take $L'=\left[
\begin{array}{cc}
I \\
B' \end{array} \right]$ with $B'=(A^t+B_2)(A^t+B_0)^{-1}(A^t+B_1)-A^t$. Note that $A^t+B'=(A^t+B_2)(A^t+B_0)^{-1}(A^t+B_1)$ is invertible,
i.e. $L' \in M_{XY}$. And such $L'$ is unique.
\end{proof}

\begin{cor}
Fixing $L_0 \in M_{XY}$, define $f: M_{XY} \rightarrow Aut(X)$, $L \mapsto \varphi_{L_0}^{-1}\circ \varphi_{L}$, then $f$ is an isomorphism.
\end{cor}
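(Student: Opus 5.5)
Fix $L_0 \in M_{XY}$ and work in the normalized coordinates from the proof of Lemma 5.1: $X=\left[\begin{array}{c} I \\ O \end{array}\right]$, $Y=\left[\begin{array}{c} A \\ I \end{array}\right]$, and every $L\in M_{XY}$ of the form $L=\left[\begin{array}{c} I \\ B \end{array}\right]$ with $A^t+B$ invertible. The computation there gives $\varphi_L(x,0)=\iota_Y\big((I+A^tA)^{-1}(A^t+B)x\big)$, where $\iota_Y(y)=(Ay,y)$ parametrizes $Y$. Hence, identifying $X$ with $\mathbb{F}^n$ via $x\mapsto (x,0)$,
\[
f(L)=\varphi_{L_0}^{-1}\circ\varphi_L = (A^t+B_0)^{-1}(A^t+B)\in GL(n,\mathbb{F})=Aut(X).
\]
Everything follows from this formula, so the first step is to record it explicitly. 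The factor $(I+A^tA)^{-1}$ cancels, exactly as in the middle arrow of the proof of Lemma 5.1.

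For bijectivity, injectivity is immediate: $f(L)=f(L')$ forces $A^t+B=A^t+B'$, hence $B=B'$ and $L=L'$. For surjectivity, given $g\in Aut(X)$, set $B=(A^t+B_0)g-A^t$. Then $A^t+B=(A^t+B_0)g$ is invertible, so $L=\left[\begin{array}{c} I \\ B \end{array}\right]\in M_{XY}$ and $f(L)=g$. The map is algebraic in $B$ with algebraic inverse, so it is a diffeomorphism.

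"Isomorphism" should mean group isomorphism, where $M_{XY}$ carries the product given by Lemma 5.1, presumably $L_1\cdot L_2:=L'$ with $\varphi_{L'}=\varphi_{L_2}\circ\varphi_{L_0}^{-1}\circ\varphi_{L_1}$, with identity $L_0$. One has to choose the order so that $f$ is a homomorphism rather than an anti-homomorphism. Applying $\varphi_{L_0}^{-1}$ on the left,
\[
f(L')=\varphi_{L_0}^{-1}\varphi_{L_2}\varphi_{L_0}^{-1}\varphi_{L_1}=f(L_2)f(L_1),
\]
so with $L_1\cdot L_2:=L'$ the map $f$ is an anti-isomorphism. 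I would therefore define the product as $L_2\cdot L_1:=L'$; equivalently, $f(L_1\cdot L_2)=f(L_1)f(L_2)$. Associativity, the identity $L_0$, and inverses then transfer automatically through the bijection $f$. One can also check directly from $B'=(A^t+B_2)(A^t+B_0)^{-1}(A^t+B_1)-A^t$ that $f(L')=(A^t+B_0)^{-1}(A^t+B_2)(A^t+B_0)^{-1}(A^t+B_1)$.

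The main obstacle is this bookkeeping: the coordinate reduction to $X=\left[\begin{array}{c} I \\ O \end{array}\right]$ uses an $O(2n)$ (resp. $U(2n)$, $Sp(2n)$) motion $k$, and I need it to be compatible with $f$. Since $k$ preserves orthogonal complements and projections, it satisfies $\varphi_{kL}=k\circ\varphi_L\circ k^{-1}$. Consequently $f$ transforms by conjugation by $k$, an isomorphism $Aut(X)\cong Aut(kX)$, so the reduction is harmless. Finally, $Aut(X)\cong GL(n,\mathbb{F})=G$, which recovers Theorem 2.1.
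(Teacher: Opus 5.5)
Your proof is correct and follows essentially the paper's route: the coordinates from Lemma 5.1 give $f(L)=(A^t+B_0)^{-1}(A^t+B)$, injectivity is immediate, and surjectivity uses the same choice $B=(A^t+B_0)g-A^t$. The group structure is the one coming from Lemma 5.1, equivalently transported through $f$. Your point about the order of composition is also right: the paper writes $f(L_1)\cdot f(L_2)=\varphi_{L_0}^{-1}\circ\varphi_{L_2}\circ\varphi_{L_0}^{-1}\circ\varphi_{L_1}$, which is actually $f(L_2)\circ f(L_1)$, so with its stated product $f$ is an anti-homomorphism unless the factors are swapped as you do.
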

\begin{proof}
It is obvious $f$ is injective. From the above lemma, we have group structure on $M_{XY}$, i.e. $M_{XY}$ is a subgroup of $Aut(X)$ under $f$.
Explicitly, for any $L_1, L_2 \in M_{XY}$, $f(L_1)\cdot f(L_2)=\varphi_{L_0}^{-1}\circ \varphi_{L_2}\circ \varphi_{L_0}^{-1}\circ \varphi_{L_1}=\varphi_{L_0}^{-1}\circ \varphi_{L'}=f(L')$ for some $L' \in M_{XY}$,
i.e. $f(M_{XY})$ is closed under multiplication; for any $L \in M_{XY}$, $f(L)^{-1}=(\varphi_{L_0}^{-1}\circ \varphi_{L})^{-1}=\varphi_{L_0}^{-1}\circ \varphi_{L_0}\circ \varphi_{L}^{-1}\circ \varphi_{L_0}=\varphi_{L_0}^{-1}\circ \varphi_{L'}=f(L')$ for some $L' \in M_{XY}$, i.e. $f(M_{XY})$ is closed under inverse.

Conversely, for any $g \in Aut(X)$, following the notations in the above lemma, we take $L=\left[
\begin{array}{cc}
I \\
B \end{array} \right]$ with $B=(A^t+B_0)g-A^t$ (recall that $X=\left[
\begin{array}{cc}
I \\
O \end{array} \right]$, $Y=\left[
\begin{array}{cc}
A \\
I \end{array} \right]$ and $L_0=\left[
\begin{array}{cc}
I \\
B_0\end{array} \right]$), then $f(L)=g$. Hence $f$ is surjective, $M_{XY}$ is isomorphic to $Aut(X)$.
\end{proof}

Therefore $G=Aut(V)=GL(n, \mathbb{F})$ can be realized as the
double graph-like Grassmannian
$M_{XY} \triangleq \{L \in M: L \bigcap X^\bot=L \bigcap Y^\bot=\{0\}\}$
for any $X, Y \in M$ with $X \cap Y=\{0\}$. A group structure is defined on $M_{XY}$ as follows:
for any $L_1, L_2 \in M_{XY}$, the product $L_1 \cdot L_2=L'$ is given by the condition $ \varphi_{L_2}\circ \varphi_{L_0}^{-1}\circ \varphi_{L_1}=\varphi_{L'}$.

\bigskip
In particular, taking $X=V_1$, $Y=V_2$ and denoting by $\pi_1: V_1\oplus V_2 \rightarrow V_1$, $\pi_2:V_1\oplus V_2 \rightarrow V_2$ the canonical projections.
Then for any $L=\left[
\begin{array}{cc}
A \\
B \end{array} \right] \in M$,
\[
\pi_1|_{L} ~ is ~ an ~ isomorphism \Leftrightarrow L \bigcap V_2=\{0\} \Leftrightarrow rank(A)=n,
\]
\[
\pi_2|_{L} ~ is ~ an ~ isomorphism \Leftrightarrow L \bigcap V_1=\{0\} \Leftrightarrow rank(B)=n.
\]
Hence, we obtain:
\begin{align}
M_{V_1V_2} & = \{L=\left[
\begin{array}{cc}
A \\
B \end{array} \right]: A ~ and ~ B ~ invertible ~ matrices\} \nonumber \\
  ~ & = \{L=\left[
\begin{array}{cc}
I \\
C \end{array} \right]: C ~ invertible ~ matrix\}   \nonumber \\
  ~ & \cong Aut(V) \nonumber
\end{align}
The group structure on $M_{V_1V_2}$ can be understood via the following construction. Explicitly, for any $L_1, L_2 \in M_{V_1V_2}$, the product $L'=L_1 \cdot L_2$ is determined by its corresponding isomorphism
$\varphi_{L'}$, which is depicted in the following diagram:

\[\includegraphics[scale=0.2]{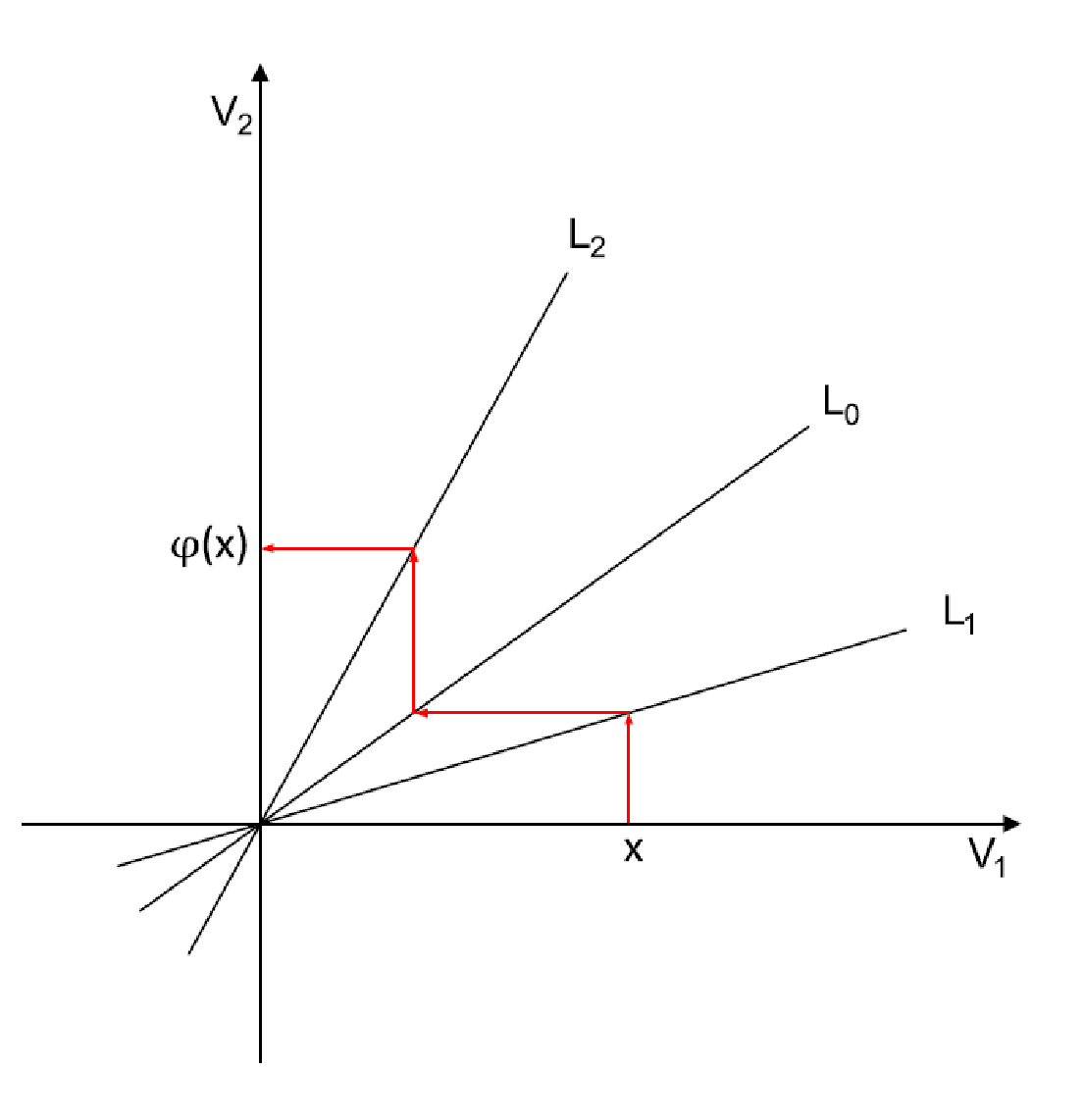}\]

\bigskip
We now turn to  other classical Lie groups $G=Aut(V,b)$. As before, define the isotropic Grassmannian  $M=IG(V_1 \oplus V_2, b\oplus-b)$ and consider the  double graph-like Grassmannian:
\[
M_{V_1V_2} \triangleq \{L \in M: L \bigcap V_1=L \bigcap V_2=\{0\}\}
\]
Then for any
$L \in M_{V_1V_2}$, we can write $L=\left[
\begin{array}{cc}
A \\
B \end{array} \right]$, where   $A$ and  $B$ are invertible  matrices  such that the restriction $b\oplus-b|_L=0$. It follows that
$L$ can be expressed in the form $L=\left[
\begin{array}{cc}
I \\
C \end{array} \right]$, where  $C \in Aut(V,b)$. Analogous to the  $G=Aut(V)$ case, the set $M_{V_1V_2}$ carries a group structure, and we have the isomorphism:
$M_{V_1V_2} \cong Aut(V,b)$.

\begin{prop}
Let $G=Aut(V,b)$ be compact. Then  for any $L \in M$,  we have $L \bigcap V_1=L \bigcap V_2=\{0\}$. Consequently, $\overline{G}=G$ holds automatically.
\end{prop}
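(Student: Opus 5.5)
The plan is to use definiteness of $b$. If $G=Aut(V,b)$ is compact, then $b$ is one of the forms in the list with $G$ compact, namely $O(n)=O(n,0)$, $U(n)=U(n,0)$, $Sp(n)=Sp(n,0)$. In each case $b(u,v)=\bar{u}^tv$ is positive definite, so $b(u,u)>0$ for every $u\neq 0$. (For $GL(n,\mathbb{F})$ with $b=0$ the group is never compact, so that case does not arise.)

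\textbf{Key step.} Take $L\in M=IG(V_1\oplus V_2,b\oplus -b)$ and a vector $u\in L\cap V_1$. Then $u=(x,0)$, and isotropy of $L$ gives
\[
0=(b\oplus-b)(u,u)=b(x,x).
\]
Positive definiteness then forces $x=0$. The same argument applied to $(0,y)\in L\cap V_2$ gives $-b(y,y)=0$, hence $y=0$. So $L\cap V_1=L\cap V_2=\{0\}$. Alternatively, once $L\cap V_1=\{0\}$ is known, Lemma 2.1 (from \cite{He}) gives $L\cap V_2=\{0\}$ directly.

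\textbf{Consequence.} It follows that $M=M_{V_1V_2}$. By Theorem 2.2 (proved in this Appendix), $M_{V_1V_2}\cong G$ through $L=\left[\begin{array}{c} I\\ C\end{array}\right]\mapsto C$, and hence $\overline{G}=M=\mathfrak{i}(G)$.

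\textbf{Remaining point.} The statement is essentially immediate; the only point needing care is confirming that the compact members of the classification list are exactly those with definite $b$. This is read off from the list in Section 2.2 with $q=0$. The other groups there ($Sp(2m,\mathbb{R})$, $Sp(2m,\mathbb{C})$, $O(n,\mathbb{C})$, $O^*(2m)$, and $O(p,q)$, $U(p,q)$, $Sp(p,q)$ with $pq\neq 0$) are all noncompact, so nothing further needs checking.
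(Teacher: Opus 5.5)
Your argument is correct and is essentially the paper's proof: the paper writes $L$ as a $2n\times n$ block matrix with blocks $A,B$ satisfying $A^tA=B^tB$ and uses $u^tu=0\Rightarrow u=0$ to force $A$ and $B$ invertible, which is your isotropy computation on a vector of $L\cap V_1$ or $L\cap V_2$, and you carry it out uniformly for $O(n)$, $U(n)$, $Sp(n)$ where the paper does only $O(n)$. One small correction: your key step uses only anisotropy of $b$ ($b(x,x)=0\Rightarrow x=0$), not positivity, so phrase it that way; your claim that every other entry of the list is noncompact is false for $O(1,\mathbb{C})=\{\pm 1\}$ and $O^*(2)\cong U(1)$, where $b$ is anisotropic but not positive definite and your argument still applies verbatim.
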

\begin{proof}
We give the proof for $G=O(n)$, the arguments for $U(n)$ and $Sp(n)$ are analogous. Take $(V,b)$ with $V \cong \mathbb{R}^n$ and $b(u,v)=u^tv$. Then
\begin{align}
IG(\mathbb{R}^n \oplus \mathbb{R}^n, b\oplus-b) \triangleq  &  \{L \in Gr(n, \mathbb{R}^n \oplus \mathbb{R}^n) : b\oplus-b|_{L}=0\} \nonumber\\
=   & \{L=\left[
\begin{array}{cc}
A \\
B \end{array} \right]: A^tA=B^tB, rank(L)=n\} \nonumber \\
=   & \{L=\left[
\begin{array}{cc}
A \\
B \end{array} \right]: A^tA=B^tB, rank(A)=rank(B)=n\} \nonumber \\
=   & \{L=\left[
\begin{array}{cc}
I \\
C \end{array} \right]: C^tC=I\} \nonumber \\
\cong   & O(n) \nonumber
\end{align}
The key step, $rank(A)=rank(B)=n$, follows from the fact that in a real vector space, $u^tu=0$ implies $u=0$.

In contrast, for the case $(V,b)$ with $V \cong \mathbb{C}^n$ and $b(u,v)=u^tv$, we have
\begin{align}
IG(\mathbb{C}^n \oplus \mathbb{C}^n, b\oplus-b) \triangleq  &  \{L \in Gr(n, \mathbb{C}^n \oplus \mathbb{C}^n) : b\oplus-b|_{L}=0\} \nonumber\\
=   & \{L=\left[
\begin{array}{cc}
A \\
B \end{array} \right]: A^tA=B^tB, rank(L)=n\} \nonumber
\end{align}
but the implication $u^tu=0 \Rightarrow u=0$ fails over $\mathbb{C}$. Consequently, one cannot deduce
$rank(A)=rank(B)=n$ as in the real setting, which illustrates why the compact group equality $\overline{G}=G$ does not hold.
\end{proof}

\bigskip

School of Mathematics, East China University of Science and Technology, Shanghai 200237, China

E-mail address: yxchen76@ecust.edu.cn

\bigskip

The Institute of Mathematical Sciences and Department of Mathematics, The
Chinese University of Hong Kong, Shatin, N.T., Hong Kong

E-mail address: leung@ims.cuhk.edu.hk

\end{document}